\numberwithin{equation}{section}
  \newtheorem{theorem}{Theorem}[section]
  \newtheorem{lemma}[theorem]{Lemma}
  \newtheorem{corollary}[theorem]{Corollary}
  \newtheorem{remark}[theorem]{Remark}
  \newtheorem{definition}[theorem]{Definition}
  \newtheorem{example}[theorem]{Example}
\title[A Ricci-type flow on globally null manifolds and its gradient estimates]{A Ricci-type flow on globally null manifolds and its gradient estimates}
\author[Mohamed H. A. Hamed, Fortun\'{e} Massamba and Samuel Ssekajja]{Mohamed H. A. Hamed*,  Fortun\'{e} Massamba**   and  Samuel Ssekajja***}
\newcommand{\acr}{\newline\indent}
\address{\llap{*\,} School of Mathematics, Statistics and Computer Science\acr
 University of KwaZulu-Natal\acr
 Private Bag X01, Scottsville 3209\acr 
South Africa}
\email{wdhamed82@gmail.com}  
\address{\llap{**\,} School of Mathematics, Statistics and Computer Science\acr
 University of KwaZulu-Natal\acr
 Private Bag X01, Scottsville 3209\acr 
South Africa}
\email{massfort@yahoo.fr, Massamba@ukzn.ac.za}  
\address{\llap{***\,} School of Mathematics, Statistics and Computer Science\acr
 University of KwaZulu-Natal\acr
 Private Bag X01, Scottsville 3209\acr
South Africa}
\email{ssekajja.samuel.buwaga@aims-senegal.org  } 
\subjclass[2010]{Primary 53C44; Secondary 53C40, 53C50}
\keywords{Null manifold, Screen integrable distribution, Ricci flow-type.}
\begin{document}
	\begin{abstract}
		Locally, a screen integrable globally null manifold $M$ splits through a Riemannian leaf $M'$ of its screen distribution and a null curve $\mathcal{C}$ tangent to its radical distribution. The leaf $M'$ carries a lot of geometric information about $M$ and, in fact, forms a basis for the study of expanding and non-expanding horizons in black hole theory. In the present paper, we introduce a degenerate Ricci-type flow in $M'$ via the intrinsic Ricci tensor of $M$. Several new gradient estimates regarding the flow are proved. 	
		
	\end{abstract}
	\maketitle
	
	\section{Introduction}
	
	Let $M$ be a compact $m$-dimensional Riemannian manifold on which a one parameter family of Riemannian metrics $g(t)$, $t\in[0, T]$, $T< T_{\epsilon}$, where $T_{\epsilon}$ is the time where there is (possibly) a blow-up of the curvature is defined. We say $(M, g(t))$ is a solution to the Ricci flow if it is evolving by the following non-linear weakly parabolic partial differential equation \cite{AA}
	\begin{equation}
	 \partial_{t}g_{ij}(x,t)=-2Ric_{ij}(x,t),\;\;\; (x, t)\in M\times [0, T],
	\end{equation}
	with $g_{ij}(x,0)=g_{ij}(x)$, where $Ric_{ij}(x,t)$ is the Ricci curvature tensor of the evolving metric $g_{ij}(x,t)$. This evolution system was initially introduced by Hamilton in \cite{RH}. The evolution equation for the metric tensor implies the evolution equation for the curvature tensor $R$ in the form $\partial_{t}R=\Delta{R}+Q$, where $\Delta$ denotes the Laplacian operator on $M$ and $Q$ is a quadratic expression of the curvatures. In particular, the scalar curvature $\widetilde{R}$ satisfies $\partial_{t}\widetilde{R}=\Delta{\widetilde{R}}+2|\widetilde{R}ic|^{2}$, so by the maximum principle its minimum is non-decreasing along the flow. By developing a maximum principle for tensors, Hamilton \cite{RH}, proved that Ricci flow preserves the positivity of the Ricci tensor in dimension three and of the curvature operator in all dimensions; moreover, the eigenvalues of the Ricci tensor in dimension three and of the curvature operator in dimension four are getting pinched point-wisely as the curvature is getting large. In \cite{GP}, Perelman used Ricci flow and its surgery to prove Poincare Conjecture. In the papers \cite{AA} and \cite{EDLP}, the authors used Ricci flow coupled to heat-like equations to study some gradient estimates. For more information about the classical Ricci flow, see the papers \cite{AA, bena,  BC, BChin, RH,  GP,RS} and references therein.
	
	When the underlying manifold $M$ is null (sometimes called degenerate or lightlike), one may not define, in the usual way, the Ricci flow associated to the degenerate metric $g$ on $M$. In fact, it is well-known in \cite{db} that, in general, there is no Ricci tensor on $M$ via the null metric $g$. When $M$ is embedded into a semi-Riemannian manifold $\overline{M}$ as a null hypersurface, special classes of $M$ do exist with an induced symmetric Ricci tensors. For instance, in \cite{ds2} the authors shows that null hypersurfaces $\mathcal{C}[M]^{0}$ of genus zero exhibits an induced Ricci tensor. As the Ricci flow is an intrinsic geometric flow, one needs not to know much about the ambient space in which $M$ is embedded. In \cite{KDN}, Kupeli studies null manifolds using a factor bundle approach and proved the existence of many geometric objects on $M$ by taking the assumption that it is stationary, i.e., the normal bundle of $M$ is a killing distribution, which secures a Levi-Civita connection on $M$. In fact, stationary $M$ is the same as a globally null manifold studied by Duggal in \cite{duggal}. A screen integrable globally null manifold $M$ is locally a product manifold of a leaf $M'$ of its screen distribution and a null curve $\mathcal{C}$ tangent to the normal bundle of $M$. The leaves $M'$ are fundamental in studying expanding and non-expanding black hole horizons in mathematical physics, see for instance \cite{KDN} and \cite{oneil} and references therein. As the intrinsic  Ricci tensor of $M$  is symmetric on $M'$, we introduce a degenerate Ricci flow-type flow on $M$  using such a Ricci tensor and investigate its properties in terms of the associated gradient estimates.  
	
	The theory of null submanifolds of a semi-Riemannian manifold is one of the most important topics of differential geometry. More precisely, null hypersurfaces appear in general relativity as models of different types of black hole horizons \cite{db,ds2,oneil}. The study of non-degenerate submanifolds of semi-Riemannian manifolds has many similarities with the Riemannian submanifolds. However, in case the induced metric on the submanifold is degenerate, the study becomes more difficult and is strikingly different from the study of nondegenerate submanifolds \cite{db}. Some of the pioneering works on null geometry is due to Duggal-Bejancu \cite{db}, Duggal-Sahin \cite{ds2} and Kupeli \cite{KDN}. Such works motivated many other researchers to invest in the study of null submanifolds, for example, \cite{BChin, db, Jin, KDN, massa, massam, mass1, mass2, sam1} and many more references therein. The rest of the paper is organized as follows. In Section \ref{globalnull}, we review the basics on globally null manifolds. In Section \ref{ricci-type flow}, we define a degenerate Ricci-type flow evolution on a globally null manifold and give some examples. In Section  \ref{gradientest1} - \ref{gradientest2}, we develop several gradient estimates for the degenerate Ricci flow-type.

	\section{Globally null manifolds}\label{globalnull}
	
	We recall the basic concepts on globally null manifolds (see \cite{duggal} for more details and references therein). Let $(M,g)$ be a real $m$-dimensional smooth manifold where $g$ is a symmetric tensor field of type $(0,2)$. We assume that $M$ is paracompact. For $x\in M$, the \textit{radical} or \textit{null} space of $T_{x}M$ is subspace, denoted by $\mathrm{Rad}\;T_{x}M$, defined by  (see \cite{KM} for more details)
	\begin{align}\label{eq2.1}
	\mathrm{Rad}\;T_{x}M=\left\{E_{x}\in \mathrm{Rad}\;T_{x}M, g(E_{x},X)=0, \; X\in T_{x}M\right\}.
	\end{align}
	The dimension, say $r$, of $\mathrm{Rad}\;T_{x}M$ is called \textit{nullity degree} of $g$.
	$\mathrm{Rad}\;T_{x}M$ is called the radical distribution of rank $r$ on $M$. Clearly, $g$ is degenerate or non-degenerate on $M$ if and only if $r>0$ or $r=0$, respectively. We say that $(M,g)$ is a null manifold if $0<r\leq m$. 
	
	In this paper, we assume that $0<r<m$. Consider a complementary distribution $S(TM)$ to $\mathrm{Rad}\;TM$ in $TM$. We call $S(TM)$ a screen distribution on $M$,  and its existence is secured by the  paracompactness of $M$.  It is easy to see that $S(TM)$ is semi-Riemannian. Therefore, we have the following decomposition
	\begin{align}\label{eq2.2}
	TM=S(TM)\oplus\mathrm{Rad}\;TM.
	\end{align} 
	The associated quadratic form of $g$ is a mapping $h:T_{x} M\longrightarrow \mathbb{R}$ given by $h(X)=g(X,X)$, for any $X\in T_{x}M$. In general, $h$ is of type $(p,q,r)$, where $p+q+r=m$, where $q$ is the index of $g$ on $T_{x}M$. We use the following range of indices: $I$, $J \in\{1,\dots,q\}$,  $A, B \in\{q+1,\dots,q+p\}$,  $\alpha, \beta \in\{1,\dots,r\}$ and  $
	a, b \in\{r+1,\dots,m\}$, $i,j \in\{1,\dots,m\}$.
	
	Throughout the paper we consider $\Gamma(\Xi)$ to be a set of smooth sections of the vector bundle $\Xi$.
	
	Using a well-known result from linear algebra, we have the following canonical form for $h$ (with respect to a local basis of $T_{x}M$):  $h=-\sum_{I=1}^{q}(\omega^{I})^{2}+\sum_{A=q+1}^{p+q}(\omega^{A})^{2},$  where  $\omega^{1},\ldots,\omega^{p+q}$ are linearly independent local differential 1-forms on $M$. With respect to a local coordinates system $(x^{i})$, the above relation leads to 
	\begin{align*}
	 &h=-\sum_{I=1}^{q}(\omega^{I})^{2}+\sum_{A=q+1}^{q+p}(\omega^{A})^{2}, \;\; h=g_{ij}dx^{i}dx^{j}, \;\; \mathrm{rank}|g_{ij}|=p+q<m,\\
	 &g_{ij}=g(\partial_{i},\partial_{j})=-\sum_{I=1}^{q}\omega^{I}_{i}\omega^{I}_{j}+\sum_{A=q+1}^{q+p}\omega^{A}_{i}\omega^{A}_{j},
	\end{align*} 
	since $\omega^{I}=\omega^{I}_{i}dx^{i}$ and $\omega^{A}=\omega^{A}_{i}dx^{i}$.
	
	Suppose  $\mathrm{Rad}\;TM$ is an integrable distribution. Then it follows from the Frobenius theorem that leaves of  $\mathrm{Rad}\,TM$ determine a foliation on $M$ of dimension $r$, that is, $M$ is a disjoint union of connected subsets $\{L_{t}\}$ and each point $x\in M$, $M$ has a coordinate system $(\mathcal{U},x^{i})$, where $i\in\{1,\dots, m\}$ and $L_{t}\cap\mathcal{U}$ is locally given by the equation $x^{a}=c^{a}$, $a\in\{r+1,\dots, m\}$ for real constants $c^{a}$, and $(x^{\alpha})$, $\alpha\in\{1,\dots, r\}$, are local coordinates of a leaf $L$ of $\mathrm{Rad}\;TM$. Consider another coordinate system $(\overline{\mathcal{U}}, \overline{x}^{\alpha})$ on $M$. The transformation of coordinates on $M$, endowed with an integrable distribution, has the following special form.
	$0=d\overline{x}^{a}=\frac{\partial\overline{x}^{a}}{\partial\overline{x}^{b}}dx^{b}+\frac{\partial\overline{x}^{a}}{\partial\overline{x}^{\alpha}}dx^{\alpha}=\frac{\partial\overline{x}^{a}}{\partial\overline{x}^{\alpha}}dx^{\alpha}$,
	which imply $\frac{\partial\overline{x}^{a}}{\partial\overline{x}^{\alpha}}=0$,  $\forall a\in\{r+1,\dots, m\}$ and   $ \alpha\in\{1,\dots, r\}$. Hence the transformation of coordinates on $M$ is given by
	\begin{align}\label{eq2.3}
	\overline{x}^{\alpha}=\overline{x}^{\alpha}(x^{1},\dots,x^{r}),\;\;\;\; \overline{x}^{a}=\overline{x}^{a}(x^{r+1},\dots,x^{m}).
	\end{align}
	
	As $g$ is degenerate on $TM$, by using (\ref{eq2.1}) and the canonical form for $h$ we obtain $g_{\alpha\beta}=g_{\alpha a}=g_{a\alpha}=0$. Thus, the matrix of $g$ with respect to the natural frame $\{\partial_{i}\}$ becomes
	$$
	(g_{ij})=\begin{pmatrix}
	O_{r,r}& O_{r,m-r}\\
	O_{m-r,r}& g_{ab}(x^{1},\dots,x^{m})
	\end{pmatrix}.
	$$
	By the coordinates in (\ref{eq2.3}), one can show that 
	\begin{align}\label{eq2.4}
	\partial_{\alpha}g_{ab}=0,\;\;\;\; \forall a,b \in\{r+1,\dots,m\},\;\;\;\; \alpha\in\{1,\dots,r\},
	\end{align}
	holds for any other system of coordinate adapted to the foliation induced by $\mathrm{Rad}\,TM$. We, therefore, suppose that (\ref{eq2.4}) holds. Also, one can show that the screen distribution $S(TM)$ is invariant with respect to the transformations in (\ref{eq2.3}).
	
	Next, we assume that $r=1$. Thus, the 1-dimensional nullity distribution $\mathrm{Rad}\;TM$ is integrable. Using the basic formula 
	$ 
	(\mathcal{L}_{X}g)(Y,Z)= X(g(Y,Z))-g([X,Y],Z)-g(Y,[X,Z]),
	$
	for any $X,Y,Z\in\Gamma(TM)$, where  $\mathcal{L}$ is the Lie-derivative operator. The following result was established in \cite{duggal}.
	\begin{theorem}[\cite{duggal}]\label{theorem2.1}
		Let $(M,g)$ be an $m$-dimensional null manifold, with $\mathrm{Rad}\;TM$ of rank $r=1$. Then, there exists a Levi-Civita metric connection $\nabla$ on $M$ with respect to the degenerate metric tensor $g$.
	\end{theorem}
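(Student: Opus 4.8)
The plan is to run the usual Koszul construction and pin down exactly where the degeneracy of $g$ becomes an obstacle. For $X,Y\in\Gamma(TM)$ set
\begin{equation*}
\omega(X,Y)(Z):=\tfrac12\bigl[Xg(Y,Z)+Yg(X,Z)-Zg(X,Y)+g([X,Y],Z)-g([X,Z],Y)-g([Y,Z],X)\bigr].
\end{equation*}
A routine computation shows that $\omega(X,Y)$ is $C^{\infty}(M)$-linear in $Z$, hence a $1$-form, and any metric torsion-free connection must satisfy $g(\nabla_{X}Y,Z)=\omega(X,Y)(Z)$. Since $g$ has rank $m-1$, this equation determines $\nabla_{X}Y$ only modulo $\mathrm{Rad}\,TM$, and it is solvable only if $\omega(X,Y)$ annihilates $\mathrm{Rad}\,TM$ — because the image of $V\mapsto g(V,\cdot)$ equals the annihilator of $\mathrm{Rad}\,TM$ (using that $g$ is non-degenerate on a screen $S(TM)$ of rank $m-1$, see \eqref{eq2.2}). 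So the first and conceptually central step is to establish this solvability condition. With $E$ a local vector field spanning $\mathrm{Rad}\,TM$ and using $g(X,E)=g(Y,E)=g([X,Y],E)=0$, the expression collapses to
\begin{equation*}
\omega(X,Y)(E)=-\tfrac12\bigl[Eg(X,Y)+g([X,E],Y)+g([Y,E],X)\bigr]=-\tfrac12(\mathcal{L}_{E}g)(X,Y),
\end{equation*}
so it all comes down to the radical direction being Killing, $\mathcal{L}_{E}g=0$. This is precisely where the hypotheses are used: a rank-one distribution is integrable, so we may work in a chart adapted to the foliation of $\mathrm{Rad}\,TM$ as in \eqref{eq2.3}, where $\mathrm{Rad}\,TM=\mathrm{span}\{\partial_{1}\}$ and $g_{1i}=0$; then $(\mathcal{L}_{\partial_{1}}g)_{ij}=\partial_{1}g_{ij}$, which vanishes since the only entries that can be nonzero are the $g_{ab}$ and $\partial_{1}g_{ab}=\partial_{\alpha}g_{ab}=0$ by \eqref{eq2.4}. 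Moreover $\mathcal{L}_{fE}g=f\,\mathcal{L}_{E}g$ because $g(E,\cdot)=0$, so the conclusion does not depend on the scaling of $E$.

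Granted solvability, I would write $\nabla$ down explicitly and then globalize. In an adapted chart the transverse block $(g_{ab})_{a,b=2}^{m}$ is smooth, symmetric and non-degenerate (it carries all the rank of $g$), with inverse $g^{ab}$, and independent of $x^{1}$ by \eqref{eq2.4}. Define
\begin{equation*}
\Gamma^{c}_{ab}:=\tfrac12 g^{cd}\bigl(\partial_{a}g_{bd}+\partial_{b}g_{ad}-\partial_{d}g_{ab}\bigr)\quad(a,b,c\ge 2),\qquad \Gamma^{k}_{ij}:=0\ \ \text{whenever}\ 1\in\{i,j,k\};
\end{equation*}
equivalently, this is the Levi-Civita connection of the transverse metric $(g_{ab})$ in the screen directions, extended by zero along $\partial_{1}$. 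Torsion-freeness, $\Gamma^{k}_{ij}=\Gamma^{k}_{ji}$, is immediate. The delicate point is metric compatibility, $\partial_{i}g_{jk}=\Gamma^{l}_{ij}g_{lk}+\Gamma^{l}_{ik}g_{jl}$: if any index equals $1$ both sides vanish thanks to $g_{1\cdot}=0$ and $\partial_{1}g_{ab}=0$, while for indices $\ge 2$ it is the classical Levi-Civita identity for $(g_{ab})$. I expect this case analysis to be the main technical hurdle, though it is really just bookkeeping governed by the block structure and the invariance \eqref{eq2.4}; it also makes clear why the theorem asserts existence only, since metric compatibility puts no constraint at all on the radical symbols $\Gamma^{1}_{ij}$, any symmetric choice being admissible.

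Finally, to pass from charts to all of $M$: by paracompactness take a locally finite atlas of adapted charts $\{\mathcal{U}_{\lambda}\}$ with a subordinate partition of unity $\{\rho_{\lambda}\}$, $\sum_{\lambda}\rho_{\lambda}=1$, let $\nabla^{\lambda}$ be the connection constructed above on $\mathcal{U}_{\lambda}$, and put $\nabla:=\sum_{\lambda}\rho_{\lambda}\nabla^{\lambda}$. Because $\sum_{\lambda}\rho_{\lambda}=1$, the inhomogeneous Leibniz term reassembles correctly and $\nabla$ is again a linear connection on $M$; moreover $\nabla g=\sum_{\lambda}\rho_{\lambda}\,\nabla^{\lambda}g=0$ and, since $[X,Y]=\sum_{\lambda}\rho_{\lambda}[X,Y]$, the torsion is $T^{\nabla}(X,Y)=\sum_{\lambda}\rho_{\lambda}\,T^{\nabla^{\lambda}}(X,Y)=0$. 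Thus $\nabla$ is torsion-free with $\nabla g=0$, i.e. the claimed Levi-Civita metric connection. One could instead keep the first two steps invariant — defining the $S(TM)$-component of $\nabla_{X}Y$ from $\omega(X,Y)$ and then adjusting the $\mathrm{Rad}\,TM$-component to preserve the Leibniz rule — but the coordinate description is what exposes the role of \eqref{eq2.4} most clearly.
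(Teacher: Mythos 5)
Your argument is correct and is essentially the route the paper (quoting \cite{duggal}) intends: the Lie-derivative formula displayed immediately before the theorem is there precisely because the whole statement reduces to the radical direction being Killing, $\mathcal{L}_{E}g=0$, which you correctly derive from the standing assumption \eqref{eq2.4}; the Koszul formula then determines $\nabla$ up to an irrelevant $\mathrm{Rad}\,TM$-component, exactly as you set it up. The only point worth flagging is that the rank-one hypothesis alone does not yield $\partial_{1}g_{ab}=0$ — \eqref{eq2.4} is an assumption the paper imposes throughout, not a consequence of $r=1$ — and your proof rightly treats it as the essential input.
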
	
	Let $C$ be a null curve in an $m$-dimensional null manifold $(M,g)$, with $m>1$ and locally given by $x^{i}=x^{i}(t)$, $t\in I\subset\mathbb{R}$, $i\in\{1,\dots,m\}$ for a coordinate neighborhood $\mathcal{U}$ on $C$. Then, the tangent vector field $
	\frac{d}{dt}=\left(\frac{dx^{1}}{dt},\dots,\frac{dx^{m}}{dt}\right)$,
	on $\mathcal{U}$ satisfies
	$g\left(\frac{d}{dt},\frac{d}{dt}\right)=0$,  i.e., $g_{ij}\frac{dx^{i}}{dt}\frac{dx^{j}}{dt}=0$, 
	where $g_{ij}=g(\partial_{i},\partial_{j})$ and $i,j\in\{1,\dots,m\}$. Denote by $TC$ the tangent bundle of $C$ which is a vector subbundle of $TM$, and is of rank 1. It is easy to see that $
	TC^{\bot}=\{V\in TM,\;g(V,E)=0\}=TM$, 
	where $E$ is a null vector field tangent over $C$. Suppose that $\mathrm{Rad}\;TM$ is of rank $r=1$. We consider a class of null curves such that $\mathrm{Rad}\;TM=TC$ and both are generated by a null vector field $E$ tangent over $C$. Let $S(TM)$ be the complementary screen distribution to $\mathrm{Rad}\;TM$. Then, we have 
	\begin{align}\label{eq2.10}
	TM=\mathrm{Rad}\;TM\bot S(TM)=TC\bot S(TM),
	\end{align}
	where $\bot$ means the orthogonal direct sum. It follows that $S(TM)$ is the transversal vector bundle of $C$ in $TM$. Suppose $S(TM)$ is Riemannian and $m=3$, we obtain the following differential equations
	\begin{align}\label{eq2.11}
	\nabla_{E}E=\widetilde{h}E,\;\; \nabla_{E}W_{1}=-k_{1}E+k_{3}W_{2},\;\;\nabla_{E}W_{2}=-k_{2}E-k_{3}W_{1},
	\end{align}
	where $\widetilde{h}$ and $\{k_{1}, k_{2}, k_{3}\}$ are smooth functions on $\mathcal{U}$ and $\{W_{1},W_{2}\}$ is an orthonormal basis of $\Gamma(S(TM))_{\mathcal{U}}$. We call $
	F=\left\{\frac{d}{dt}= E, W_{1}, W_{2}\right\}$  a Fr\'enet frame on $M$ along $C$ with respect to the screen distribution $S(TM)$. The functions $\{k_{1},k_{2},k_{3}\}$ and the differential equations (\ref{eq2.11}) are called curvature functions of $C$ and Fr\'enet equations for $F$, respectively. The result can be generalized for higher dimensions. It is important to mention that for any $m$, the first Fr\'enet equation for $F$ remains the same. Now we show that it is always possible to find a parameter on $C$ such that $\widetilde{h}=0$, using the same screen distribution $S(TM)$. Consider another coordinate neighborhood $\mathcal{U}^{*}$, and its Fr\'enet frame $F^{*}$, with $\mathcal{U}\cap\mathcal{U}^{*}\neq\phi$. Then, $
	\frac{d}{dt^{*}}=\frac{dt}{dt^{*}}\frac{d}{dt}$.
	
	Writing the first Fr\'enet equation in (\ref{eq2.11}) for both $F$ and $F^{*}$ and using above transformation, we obtain
	$\frac{d^{2}t}{dt^{*2}} + \widetilde{h}\left(\frac{dt}{dt^{*}}\right)^{2} = \widetilde{h}^{*}\frac{dt}{dt^{*}}$. Consider the differential equation $
	\frac{d^{2}t}{dt^{*2}} - \widetilde{h}^{*}\frac{dt}{dt^{*}}=0$,
	whose general solution comes from
	\begin{align}\label{eq2.13}
	t=a\int_{t_{0}}^{t^{*}}\exp\left(\int_{s_{0}}^{s} \widetilde{h}^{*}(t^{*})dt^{*}\right)ds+b,\;\;\;\; a,b\in\mathbb{R}.
	\end{align}
	It follows that a solution of (\ref{eq2.13}), with $a\neq 0$, may be taken as a special parameter on $C$ such that $\widetilde{h}=0$. Denote such a parameter by $p=\frac{t-b}{a}$, where we call $t$ the general parameter as given in (\ref{eq2.13}), and $p$ a distinguished  parameter of $C$. Then, the first Fr\'enet equation is given by $\nabla_{\frac{d}{dp}}\frac{d}{dp}=0$ and, therefore, $C$ is a null geodesic of $M$, with respect to the distinguished  parameter $p$. Since the first Fr\'enet equation is the same for any $m$, the following holds.
	\begin{theorem}[\cite{duggal}]
		Let $C$ be a null curve of a null manifold $M$, with $\mathrm{Rad}\;TM$ of rank 1 and Riemannian screen distribution. Then, there exists a distinguished  parameter $p$ with respect to which $C$ is a null geodesic of $M$.
	\end{theorem}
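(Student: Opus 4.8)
The plan is to work solely with the first Fr\'enet equation $\nabla_{d/dt}\frac{d}{dt}=\widetilde{h}\,\frac{d}{dt}$ attached to a Fr\'enet frame $F$ along $C$ relative to the Riemannian screen distribution $S(TM)$, which by the remark preceding the statement keeps the same form in every dimension $m>1$; hence it suffices to gauge away the function $\widetilde{h}$ by a suitable change of parameter while keeping $S(TM)$ fixed. First I would fix two overlapping coordinate neighbourhoods $\mathcal{U},\mathcal{U}^{*}$ on $C$ with Fr\'enet frames $F,F^{*}$ built from the same screen distribution, record the parameter change $\frac{d}{dt^{*}}=\frac{dt}{dt^{*}}\frac{d}{dt}$, and substitute it into the first Fr\'enet equation written for both $F$ and $F^{*}$. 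Using that $\nabla$ is the Levi-Civita connection supplied by Theorem \ref{theorem2.1}, this yields the scalar identity
\[
\frac{d^{2}t}{dt^{*2}}+\widetilde{h}\Big(\frac{dt}{dt^{*}}\Big)^{2}=\widetilde{h}^{*}\,\frac{dt}{dt^{*}}.
\]

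Next I would observe that it is enough to annihilate $\widetilde{h}^{*}$ in the starred parametrisation: if $t=t(t^{*})$ is a nonconstant solution of the linear second-order ODE $\frac{d^{2}t}{dt^{*2}}-\widetilde{h}^{*}\frac{dt}{dt^{*}}=0$, the identity above forces $\widetilde{h}\equiv 0$. Integrating this ODE gives the explicit family
\[
t=a\int_{t_{0}}^{t^{*}}\exp\!\Big(\int_{s_{0}}^{s}\widetilde{h}^{*}(u)\,du\Big)\,ds+b,\qquad a,b\in\mathbb{R},
\]
and for $a\neq 0$ the derivative $\frac{dt}{dt^{*}}=a\exp\big(\int\widetilde{h}^{*}\big)$ never vanishes, so $t^{*}\mapsto t$ is a genuine reparametrisation of $C$. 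Writing $p=(t-b)/a$ and renaming, the first Fr\'enet equation becomes $\nabla_{d/dp}\frac{d}{dp}=0$; since $\frac{d}{dp}=E$ is null, $C$ is a null geodesic of $\nabla$, and the distinguished parameter $p$ is unique up to a positive affine change. As the whole computation used only the first Fr\'enet equation, which is $m$-independent, the conclusion holds for arbitrary $m>1$.

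The main obstacle I expect is the compatibility between the reparametrisation and the screen distribution: one must verify that the Fr\'enet frame $F^{*}$ on $\mathcal{U}^{*}$ can be chosen with exactly the same $S(TM)$ fixed on $\mathcal{U}$, so that $\widetilde{h}$ and $\widetilde{h}^{*}$ are precisely the coefficients occurring in (\ref{eq2.11}) and transform by the displayed formula rather than picking up extra screen-rotation terms. This is where the invariance of $S(TM)$ under the admissible coordinate changes (\ref{eq2.3}) and the Riemannian hypothesis, which guarantees a globally defined orthonormal screen basis $\{W_{1},\dots,W_{m-1}\}$ along $C$, are genuinely used. Once that compatibility is in place, the remaining work is the elementary ODE solution above and a routine check that the constructed $p$ is defined on the whole parameter interval $I$.
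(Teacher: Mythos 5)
Your proposal is correct and follows essentially the same route as the paper: write the first Fr\'enet equation in two overlapping parametrisations with the same screen distribution, derive the transformation law $\frac{d^{2}t}{dt^{*2}}+\widetilde{h}\big(\frac{dt}{dt^{*}}\big)^{2}=\widetilde{h}^{*}\frac{dt}{dt^{*}}$, solve the linear ODE $\frac{d^{2}t}{dt^{*2}}-\widetilde{h}^{*}\frac{dt}{dt^{*}}=0$ to get the distinguished parameter $p=(t-b)/a$ with $a\neq 0$, and conclude $\nabla_{d/dp}\frac{d}{dp}=0$, the dimension-independence coming from the fact that only the first Fr\'enet equation is used. Your closing remark on keeping $S(TM)$ fixed across the overlap is a sensible point of care but does not change the argument, which matches the paper's.
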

	In view of the above theorem, the author in \cite{duggal} defined globally null manifolds as follows.
	 
	A null manifold $(M,g)$ is said to be a \textit{globally null manifold} if it admits a single global null vector field and a complete Riemannian hypersurface.
	
	\noindent As a consequence, the following characterization holds on globally null manifolds.
	\begin{theorem}[\cite{duggal}]\label{global}
		Let $(M, g)$ be a globally null manifold. Then, the following assertions are equivalent:
		\begin{enumerate}
			\item  The screen distribution $S(TM)$ is integrable.
			\item  $M=M'\times C'$ is a global product manifold, where $M'$ is a leaf of $S(TM)$ and $C'$ is a 1-dimensional integral manifold of a global null curve $C$ in $M$.
			\item  $S(TM)$ is parallel with respect to the metric connection $\nabla$ on $M$.
		\end{enumerate}
	\end{theorem}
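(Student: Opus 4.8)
The plan is to establish the cycle $(2)\Rightarrow(1)\Rightarrow(2)$ is trivial, so really to prove $(1)\Rightarrow(2)\Rightarrow(3)\Rightarrow(1)$, with the implication $(1)\Rightarrow(2)$ carrying essentially all of the content and the other two being formal consequences of the structure of the metric connection $\nabla$ supplied by Theorem \ref{theorem2.1}. I would open with the observation, used throughout, that $\mathrm{Rad}\,TM$ is \emph{always} $\nabla$-parallel: for a local generator $E$ of $\mathrm{Rad}\,TM$ and any $X,Z\in\Gamma(TM)$ one has $0=(\nabla_{X}g)(E,Z)=X\,g(E,Z)-g(\nabla_{X}E,Z)-g(E,\nabla_{X}Z)=-g(\nabla_{X}E,Z)$, since $g(E,\cdot)\equiv 0$ on $TM$; hence $\nabla_{X}E\in\Gamma(\mathrm{Rad}\,TM)$. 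The implication $(3)\Rightarrow(1)$ is then immediate: if $\nabla_{X}Y,\nabla_{Y}X\in\Gamma(S(TM))$ for $X,Y\in\Gamma(S(TM))$, torsion-freeness gives $[X,Y]=\nabla_{X}Y-\nabla_{Y}X\in\Gamma(S(TM))$, so $S(TM)$ is integrable by Frobenius.

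For $(1)\Rightarrow(2)$: integrability of $S(TM)$ together with the (automatic) integrability of the rank-one distribution $\mathrm{Rad}\,TM=TC$ yields, by Frobenius, a local bi-foliation and hence a local splitting $M\cong M'\times C$ near each point, $M'$ being an integral manifold of $S(TM)$ (necessarily Riemannian, as $S(TM)$ is) and $C$ an integral curve of the global null vector field $E$. To upgrade this to a genuine global product $M=M'\times C'$ I would fix a complete Riemannian leaf $M'$ of $S(TM)$ — which exists by the very definition of a globally null manifold — and consider $\Phi\colon M'\times\mathbb{R}\to M$, $\Phi(y,s)=\varphi_{s}(y)$, where $\varphi_{s}$ is the flow of $E$. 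The local product structure shows $\Phi$ is a local diffeomorphism; completeness of $M'$ and the fact that $E$ is a single, globally defined, nowhere-vanishing field should force $\Phi$ to be defined for all $s$, surjective and injective, identifying $M$ with $M'\times C'$ where $C'=\Phi(\{pt\}\times\mathbb{R})$. \emph{This globalization is the main obstacle}: the Frobenius step is routine, but passing from a local product to a global one is precisely what the completeness/globality hypotheses are designed for, and the argument must use them essentially — a connectedness-plus-completeness (covering-space type) argument, controlling the maximal domain of the flow and ruling out that distinct leaves get glued.

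For $(2)\Rightarrow(3)$: once $M=M'\times C'$ with $S(TM)=TM'$ and, by the analogue of $(\ref{eq2.4})$, $g=g'\oplus 0$ for a leaf-metric $g'$ independent of the $C'$-coordinate, I would verify that $\nabla$ is the product connection, i.e. that it restricts to the Levi-Civita connection of $(M',g')$ on $\Gamma(TM')$ and has vanishing mixed components. This is forced by $\nabla g=0$, torsion-freeness, and parallelism of $\mathrm{Rad}\,TM$, together with the screen-adaptedness built into the construction of Theorem \ref{theorem2.1}: in coordinates adapted to the bi-foliation one writes out the Christoffel symbols, finds that $\nabla_{\partial_{k}}E$ has no $S(TM)$-component and that $\nabla_{\partial_{a}}\partial_{b}$ has $S(TM)$-component the ordinary Christoffel combination of $g'$, and concludes $\nabla_{X}Y\in\Gamma(S(TM))$ for $Y\in\Gamma(S(TM))$ and all $X\in\Gamma(TM)$, which is $\nabla$-parallelism of $S(TM)$.

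Alternatively one can prove $(1)\Leftrightarrow(3)$ directly and more quantitatively: choosing a frame $\{E,W_{1},\dots,W_{m-1}\}$ adapted to $TM=\mathrm{Rad}\,TM\perp S(TM)$ and writing $\nabla_{W_{a}}W_{b}=\widehat\nabla_{W_{a}}W_{b}+h(W_{a},W_{b})E$ with $\widehat\nabla_{W_{a}}W_{b}\in\Gamma(S(TM))$, torsion-freeness identifies $h(W_{a},W_{b})-h(W_{b},W_{a})$ with the $\mathrm{Rad}\,TM$-component of $[W_{a},W_{b}]$, so $S(TM)$ is integrable iff $h$ is symmetric; one then checks, using the defining (screen-adapted) properties of $\nabla$, that symmetry of $h$ forces $h\equiv 0$ and likewise $\nabla_{E}W_{b}\in\Gamma(S(TM))$, i.e. $\nabla$-parallelism. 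The delicate point on this route is that the relevant components of $\nabla$ are screen-dependent, so one must invoke the specific connection of Theorem \ref{theorem2.1} rather than an arbitrary metric connection compatible with $g$; this is the same phenomenon that makes the completeness hypothesis unavoidable in $(1)\Rightarrow(2)$.
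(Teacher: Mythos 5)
First, a point of orientation: the paper does not prove Theorem \ref{global} at all --- it is quoted from \cite{duggal} with a citation and no argument --- so your attempt can only be judged on its own merits, not against an internal proof. Your skeleton is reasonable and the easy pieces are correct: the observation that $\mathrm{Rad}\,TM$ is automatically $\nabla$-parallel (because $g(E,\cdot)\equiv 0$ kills every term of $(\nabla_{X}g)(E,Z)$ except $-g(\nabla_{X}E,Z)$) is right, and $(3)\Rightarrow(1)$ and $(2)\Rightarrow(1)$ follow as you say.

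The genuine gap is that both substantive implications hinge on the same unproved sub-claim, namely control of the $\mathrm{Rad}\,TM$-component of $\nabla_{X}W$ for $W\in\Gamma(S(TM))$, and this is precisely the component that metric compatibility cannot see (again because $g(E,\cdot)\equiv 0$). Concretely: in $(1)\Rightarrow(2)$ your map $\Phi(y,s)=\varphi_{s}(y)$ identifies $M$ with a product only if the flow of $E$ carries screen leaves to screen leaves, i.e. $[E,W]\in\Gamma(S(TM))$ for $W\in\Gamma(S(TM))$; writing $[E,W]=\nabla_{E}W-\nabla_{W}E$ and using $\nabla_{W}E\in\Gamma(\mathrm{Rad}\,TM)$, this amounts to a statement about the radical component of $\nabla_{E}W$, which integrability of $S(TM)$ does not give you (note that the Fr\'enet equations (\ref{eq2.11}) exhibit $\nabla_{E}W_{1}$ with a nonzero radical component $-k_{1}E$ in general). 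You flag the injectivity/completeness issue for $\Phi$ but not this leaf-invariance issue, which must be extracted from the stationarity condition $\mathcal{L}_{E}g=0$ (equivalently (\ref{eq2.4})) underlying Theorem \ref{theorem2.1}. The same omission reappears in your alternative route, where you assert that symmetry of $h$ ``forces $h\equiv 0$'': since the radical components of $\nabla$ are invisible to $g$, nothing in torsion-freeness plus metric compatibility yields this, and for the extrinsic analogue (lightlike hypersurfaces with induced connection) an integrable screen need not be parallel, so the implication really does require the specific construction of $\nabla$ --- which you invoke by name but never use. Until one of these two sub-claims is actually established, the cycle $(1)\Rightarrow(2)\Rightarrow(3)$ is not closed.
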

	\noindent As an example, we have the following.
	\begin{example}\label{exam}
		\rm{Let $(\mathbb{R}^{4}_{1},\overline{g})$ be the Minkowski spacetime with metric $ds^{2}=-dt^{2}+dr^{2}+r^{2}(d\theta^{2}+\sin^{2}\theta d\phi^{2})$, for a spherical coordinate system $(t,r,\theta,\phi)$, which is non-singular if we restrict $0<r<\infty,\;\; 0<\theta<\pi,\;\; 0<\phi<2\pi$. It is well known that Minkowski spacetime is globally hyperbolic. Take two null coordinates $u=t+r$ and $v=t-r\;(u>v)$. Then, we have 
			\begin{align}\label{eq2.14}
			ds^{2}=-dudv+\frac{1}{4}(u-v)^{2}(d\theta^{2}+\sin^{2}\theta d\phi^{2}),\;\;\; -\infty<u,v<\infty.
			\end{align}
			The absence of the terms $du^{2}$ and  $dv^{2}$ in (\ref{eq2.14}) implies that the two hypersurfaces $\{v=constant\}$,  $\{u=constant\}$ are null. Denote one of these null hypersurfaces by $(M,g)$, where $g$ is the induced degenerate metric tensor of $\overline{g}$. A leaf of the 2-dimensional screen distribution $S(TM)$ is topologically a 2-sphere with complete Riemannian metric $d\Omega^{2}=r^{2}(d\theta^{2}+\sin^{2}\theta d\phi^{2})$ which is the intersection of the two hypersurfaces. Since by definition a spacetime admits a global timelike vector field, it follows that both its null hypersurfaces admit a single global null vector field. Thus, there exists a pair of globally null manifolds, as null hypersurfaces of a Minkowski spacetime.				
		}
	\end{example}
	\begin{remark}
	{\rm Globally null manifolds were also studied by Kupeli in \cite{KDN}, under the name of stationary singular semi-Riemannian manifolds.}
	\end{remark}

	\section{A degenerate Ricci-type flow}\label{ricci-type flow} 
	
	Let $(M, g)$ be a null manifold and $S(T M)$ be its screen distribution. A globally null manifold $(M,g)$ is said to be a \textit{screen integrable globally null manifold} if its screen distribution $S(T M)$ is integrable.
	
	Let $(M, g)$ be a screen integrable globally null manifold and $(M',g')$ be a leaf of $S(T M)$ such that $g'=g|_{M'}$, immersed in $M$ as a non-degenerate submanifold. Denote by $P$ the projection morphism of $TM$ into $S(TM)$. Let $R$ denote the curvature tensor of $M$ with respect to the metric connection $\nabla$ on $M$. If $R'$ is the curvature tensor of $M'$ with respect to the connection $\nabla'$ on $M'$, then, in view of \cite[p. 48]{KDN}, we have $$R'(X,Y)Z'=PR(X,Y)Z,$$ for all $X,Y,Z \in \Gamma(TM $ with $Z'=PZ$. The \textit{intrinsic curvature tensor} $\overline{R}$ of $(M',g')$ is then given by 
	$ 
	\overline{R}(X',Y')Z'=R'(X,Y)Z',
	$ 
	where $X'=PX$, $Y'=PY$ and $Z'=PZ$, for all $X,Y,Z\in \Gamma(TM)$. $\overline{R}$ satisfies the usual properties of Riemannian curvature tensors (see Theorem 3.2.10 of \cite[p. 49]{KDN}). Note that $\overline{R}$ is tensorial with respect to its entries, hence for any $x\in M$, $(\overline{R}(X',Y')Z')_{x}$ depend on the values of $X',Y',Z'\in T_{x}M'$. Moreover, the associated curvature-like tensor of type (0,4) is given by 
	$$
	\overline{R}(X',Y',Z',W')=g'(\overline{R}(X',Y')Z',W'),
	$$
	where $X',Y',Z',W'\in \Gamma(T_{x}M')$.
	
	Let $(M,g)$ be a screen integrable global null submanifold. Then, the \textit{Ricci tensor} $Ric'$ of $M'$ is defined by $Ric'(X',Y')=\mathrm{trace}\{Z'\longrightarrow\overline{R}(Z',X')Y'\}$, for all $X',Y',Z'\in \Gamma(T_{x}M')$. The corresponding scalar curvature $Scal'\in C^{\infty}(M')$ of $(M',g')$ is defined by $Scal'=\mathrm{trace}(Ric')$. For a function $f\in C^{\infty}(M)$, we will denote by $\overline{f} = f_{|_{M'}}$  its restriction on $M'$. Recall that for a smooth function $\overline{f}$, we define a symmetric 2-tensor $\overline{\nabla}^{2}\overline{f}$ by $\overline{\nabla}^{2}\overline{f}(X',Y')=X'Y'\overline{f}-(\nabla_{X'}Y')\overline{f}=\mathrm{Hess}(\overline{f})(X',Y')$, from which we define the Laplacian $\underline{\Delta}\overline{f}$ of $\overline{f}$ as $\underline{\Delta}\overline{f}=\mathrm{trace}\overline{\nabla}^{2}\overline{f}=g'^{ab}(\overline{\nabla}^{2}\overline{f})_{ab}$. Notice that $\underline{\Delta}\overline{f}=\mathrm{div}(\overline{\nabla}\,\overline{f})$, where $\overline{\nabla}\,\overline{f}$ denotes the gradient of $\overline{f}$ and $\mathrm{div}(\cdot)$ denotes the divergence operator on $M'$. 
	
	For more details of curvature properties on globally null manifolds, see Kupeli \cite{KDN}. In view of the above background, we define a degenerate Ricci-type flow on $(M,g)$ as follows.
	\begin{definition}{\rm
		Let $(M, g)$ be a screen integrable globally null manifold, and let $(M',g'=g|_{M'})$ be a leaf of $S(T M)$. \textit{A degenerate Ricci-type flow} of $(M,g)$ is a family $\{g'(t)\}_{t\in I}$ of Riemannian metrics on a smooth manifold $(M',g')$, parametrized by a time interval $I\subset \mathbb{R}$ and evolving by $$\partial_{t}g'(t)=-2Ric'(g'(t)).$$}
	\end{definition}
	In harmonic local coordinates around a point $x\in M$, the Ricci tensor at $x$ is 
	$ 
	Ric'_{ab}(x)= - \frac{1}{2}\underline{\Delta}g'_{ab}(x).
	$ 
	Thus, a degenerate Ricci-type flow resembles a heat flow evolution.
	
	Next, we will give some examples of degenerate Ricci flows.
	\begin{example}
{\rm Let $(M,g)$ be a globally null manifold, and $\phi : M\longrightarrow  \mathbb{R}_{1}^{n+2}$ be an isometric immersion of $M$ into $\mathbb{R}_{1}^{n+2}$	as a null hypersurface. Then, by the definition of globally null manifolds, $M$ is embedded  in $\mathbb{R}_{1}^{n+2}$ as totally geodesic hypersurface. As $\mathbb{R}_{1}^{n+2}$ is flat, we notice that $M$ is flat as well (see \cite{db} for more details). Consequently, the intrinsic curvature $\overline{R}$ vanishes and each leaf $M'$ is flat.  Thus, the flat metric on $M'$ has zero Ricci curvature, so it does not evolve at all under the degenerate Ricci-type flow.
	}
	\end{example}
\begin{example}
		\rm{Consider the globally null manifold $(M, g)$ of Example \ref{exam}. Each leaf $M'$ of $S(TM)$ is a $2$-dimensional sphere of radius $r$, the metric is given by $g'=r^{2}g''$, where $g''=d\theta^{2}+\sin^{2}\theta d\phi^{2}$ is the metric on the unit sphere. The sectional curvatures are all $1/r^{2}$. Thus for any unit vector filed  $X'\in \Gamma(TM')$, we have $Ric'(X',X')=1/r^{2}$. Therefore, $Ric'=\frac{1}{r^{2}}g'=g''$, so the degenerate Ricci-type flow equation becomes an ordinary differential equation
			$\partial_{t}g'=-2Ric'$, from which we obtain $\frac{d(r^{2})}{d{t}}=-2$. Solving this ODE, gives $r(t)= \sqrt{R_{0}^{2}-2t}$,
			where $R_{0}$ is the initial radius of the sphere.	Notice that the degenerate Ricci-type flow of $(M,g)$ will become singular at $t=(1/2) R_{0}^{2}$. At this time, the leaf $M'$ has collapsed to a point. 
			\rm}
	\end{example}
The existence and uniqueness of degenerate Ricci-type flows can be established in the same way as in the classical Ricci flow, as well as the associated geometric evolution equations for the induced objects associated  to curvature (see \cite[p. 90]{bena} for more details). 
	
As each leaf $M'$ is Riemannian, we can define the distance function on $M'$ in a natural way. For a point $p\in M'$, define $d(x,p)$ for all $x\in M'$, where $d(\cdot, \cdot)$ is the geodesic distance. It is important to note that $d$ is only Lipschitz continuous, that is, everywhere continuous except at the cut locus of $p$ and on the point where $x$ coincides with $p$. One can easily see that $|\overline{\nabla}d|=g'^{ab}\partial_{a}d\partial_{b}d=1$ on $M'/\{\{p\}\cup cut(p)\}$. Let $d(x,y,t)$ be the geodesic distance between $x$ and $y$ with respect to the Riemannaian metric $g'(t)$, we define a smooth cut-off function $\varphi (x,t)$ with support in the geodesic cube 
$$
\mathcal{Q}_{2\rho, T}:=\{(x,y)\in M'\times (0,T]:d(x,y,t)\le 2\rho\},
$$
for any $C^{2}$-function $\psi$ on $[0,\infty)$ with $\psi(s)=1$, for $s\in  [0,1]$ and $\psi(s)=0$, for $s\in [2,+\infty]$ (see \cite{AA} for more details). Furthermore, $\psi'(s)\le 0$, $\psi''(s)\ge -c_{1}$ and $\frac{|\psi'|^{2}}{\psi}\le c_{2}$, where $c_{1},c_{2}$ are constants such that $\varphi(x,t)=\psi(d(x,p,t)/\rho)$ and $\varphi|_{\mathcal{Q}_{2\rho,T}}=1$.

As in \cite{AA}, let $M'$ be a complete $n$-dimensional leaf of the integrable secreen distribution $S(TM)$ of null manifold $(M, g)$ whose Ricci curvature is bounded from below by $Ric'\ge (n-1)k$, for some constant $k\in \mathbb{R}$. Then the Laplacian of the distance function satisfies
	\begin{align}
	\underline{\Delta}d(x,p)=\begin{cases}(n-1)\sqrt{k}\cot(\sqrt{k}\rho),& k>0,\\ (n-1)\rho^{-1},&k=0,\\(n-1)\sqrt{|k|}\coth(\sqrt{k}\rho),& k<0. \end{cases}
	\end{align}
	Throughout, we will impose boundedness condition on the Ricci curvature of the metric and note that when the metric evolves by the degenerate Ricci-type flow, boundedness and sign assumptions are preserved as long as the flow exists, so also the metrics are uniformly equivalent, if $-\rho_{1}g'\leq Ric'\leq\rho_{2}g'$, where $g'(t)$, $t\in(0,T]$ is a degenerate Ricci flow, then
	\begin{align}
	e^{-\rho_{1}T}g'(0)\leq g'(t)\leq e^{-\rho_{2}T}g'(0).
	\end{align}
	
	\section{Some gradient estimates}\label{gradientest1}
	
	In this section, we discuss the localized version of gradient estimate on the heat equation perturbed with curvature operator under both forward and backward degenerate Ricci-type flow. The estimate under backward action of degenerate Ricci-type flow is related to the local monotonicity for heat kernel and mean value theorem of Ecker, Knopf, Ni and Topping in \cite{EDLP}. They worked in general geometric flow, we follow their approach. 
	
	Let $(M, g)$ be a screen integrable globally null manifold and $(M',g'=g|_{M'})$ be a leaf of the screen distribution $S(T M)$. We consider the conjugate heat equation coupled to the backward and forward degenerate degenerate Ricci-type flow, respectively, as follows:
	\begin{align}\label{backward}
	\left\{\begin{array}{l}        
	(\partial_{t}-\underline{\Delta}+Scal')u(x,t)=0,\\
	{\partial_{t}}g'(x,t)=2Ric'(x,t)
	       \end{array}\right.
	     \;\; \mbox{and}\;\;
	\left\{\begin{array}{l}     
	  (\partial_{t}-	\underline{\Delta}+Scal')u(x,t)=0,\\
	  {\partial_{t}}g'(x,t)=-2Ric'(x,t).      
	       \end{array}\right.    
	\end{align}
	Throughout this paper, we denote by $\|\cdot\|$ the norm on $M'$ with respect to $g'$. Suppose $u=u(x,t)$ solves the conjugate heat equation and satisfies $0<u<A$ in the geodesic cube $\mathcal{Q}_{2\rho}\subset M'$ as defined by
	\begin{align}
	\mathcal{Q}_{2\rho,T}:= \{(x,t)\in M'\times(0,T]: d(x, y, t)\leq 2\rho\},
	\end{align}
	
	We set $\omega_{i}:=\partial_{i}\omega$,  $\omega_{ij}:=\partial_{i}\partial_{j}\omega$ and so on, for some smooth function $\omega$ on $M'$, where $1\le i,j\le n$. Then we have 
	\begin{theorem}\label{Theorem10}
     Let $(M, g)$ be a screen integrable globally null manifold and $(M',g')$ be a leaf of $S(T M)$. Suppose that $(M',g'(t))$ (with $t\in(0, T]$) be a complete solution to the backward degenerate Ricci flow with $Scal'\geq{-\rho_{1}}$ and $Ric'\geq{-\rho_{2}}$ and $\left \|\overline{\nabla}\,Scal'\right \|\leq{\rho_{3}}$, for some constants $\rho_{1},\rho_{2},\rho_{3}\ge 0$. Let $u=u(x,t)$ be any positive solution to the heat equation defined in $\mathcal{Q}_{2\rho,T}\subset(M'\times(0,T])$ satisfying $0<u\le A$. Then, there exist absolute constants $c_{1},c_{2}$ depending on $n$ such that 
		$$
		\frac{\left\|\overline{\nabla} u\right\|^{2}}{u^{2}}\le\left(1+\ln\Big(\frac{A}{u}\Big)\right)^{2}\left(\frac{1}{t}+c_{2}\rho_{1}+4\rho_{2}+2\rho_{3}+\frac{1}{\rho^{2}}\Big(\rho{c_{1}}\sqrt{\rho_{2}}+c_{2}\Big)\right).
		$$	
	\end{theorem}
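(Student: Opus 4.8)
The plan is to adapt the classical Li–Yau–Hamilton–type argument for the logarithmic gradient estimate (cf. \cite{AA, EDLP}) to the intrinsic geometry of the leaf $M'$, working entirely with the intrinsic Laplacian $\underline{\Delta}$, the intrinsic curvature $\overline{R}$, and the intrinsic Ricci and scalar curvatures $Ric'$, $Scal'$. Set $w = \ln(A/u)$, so that $w \ge 0$ on $\mathcal{Q}_{2\rho,T}$ and, since $u$ solves $(\partial_t - \underline{\Delta} + Scal')u = 0$ along the \emph{backward} flow $\partial_t g' = 2Ric'$, a direct computation gives an evolution equation for $w$ of the form $(\partial_t - \underline{\Delta})w = -\|\overline{\nabla} w\|^2 + Scal'$. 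Introduce the quantity $F = \dfrac{\|\overline{\nabla} w\|^2}{(1+w)^2}$ (or equivalently work with $\|\overline{\nabla} u\|^2 / u^2$ weighted by $(1+\ln(A/u))^{-2}$); the goal is to bound $F$ from above at an interior maximum of $\varphi^2 t F$, where $\varphi(x,t) = \psi(d(x,p,t)/\rho)$ is the cut-off function introduced above.

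First I would derive the Bochner-type identity on $M'$: since $\overline{R}$ satisfies the usual curvature symmetries (Theorem 3.2.10 of \cite{KDN}, as recalled), the standard Bochner formula $\underline{\Delta}\|\overline{\nabla} w\|^2 = 2\|\overline{\nabla}^2 w\|^2 + 2\langle \overline{\nabla} w, \overline{\nabla}\underline{\Delta} w\rangle + 2Ric'(\overline{\nabla} w, \overline{\nabla} w)$ holds verbatim. Combining this with the evolution equation for $w$ and with the time-derivative of $g'$ (which contributes $\pm 2Ric'(\overline{\nabla} w,\overline{\nabla} w)$ terms when differentiating $\|\overline{\nabla} w\|^2$), I would obtain a differential inequality $(\partial_t - \underline{\Delta})F \le (\text{good negative terms from } \|\overline{\nabla}^2 w\|^2) + (\text{curvature error terms controlled by } \rho_1, \rho_2) + (\text{terms involving }\overline{\nabla} Scal' \text{ controlled by }\rho_3) + (\text{transport terms in }\overline{\nabla} F)$. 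The lower bounds $Scal' \ge -\rho_1$, $Ric' \ge -\rho_2$, $\|\overline{\nabla} Scal'\| \le \rho_3$ are exactly what is needed to absorb all curvature contributions into the constants $\rho_1, \rho_2, \rho_3$ appearing in the statement.

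Next I would run the localization: let $(x_0, t_0)$ be a point where $\varphi^2 t F$ attains its maximum over $\overline{\mathcal{Q}_{2\rho,T}}$ (assuming it is positive and interior; the cut-off kills the boundary). At $(x_0,t_0)$ one has $\overline{\nabla}(\varphi^2 t F) = 0$ and $\underline{\Delta}(\varphi^2 t F) \le 0$ and $\partial_t(\varphi^2 t F) \ge 0$. Plugging the differential inequality for $F$ into these, the terms that must be estimated are $\underline{\Delta}\varphi$, $\|\overline{\nabla}\varphi\|^2/\varphi$, and $\partial_t \varphi$. Here $\|\overline{\nabla}\varphi\|^2/\varphi \le c_2/\rho^2$ from the properties $|\psi'|^2/\psi \le c_2$, $\psi'' \ge -c_1$; the Laplacian comparison quoted just above the theorem gives $\underline{\Delta}d \le (n-1)\sqrt{\rho_2}\coth(\sqrt{\rho_2}\,d)$ under $Ric' \ge -(n-1)\rho_2/(n-1) = -\rho_2$, producing the $\rho c_1\sqrt{\rho_2}$ contribution; and $\partial_t\varphi = \psi'(d/\rho)\,\rho^{-1}\partial_t d$, where $\partial_t d$ is controlled along the flow by $\int |Ric'|$ along the minimizing geodesic, again bounded by $\rho_2$. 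Assembling these at $(x_0,t_0)$ gives a quadratic inequality in $M := \varphi^2 t F(x_0,t_0)$ of the shape $M \le C(1 + M^{1/2} + \rho^2 t(\rho_1 + \rho_2 + \rho_3) + \ldots)$, which yields an upper bound on $M$; evaluating at an arbitrary interior point (where $\varphi = 1$) and rearranging $(1+w)^2$ back to $(1+\ln(A/u))^2$ then gives the stated estimate.

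The main obstacle I anticipate is twofold. First, verifying that the Bochner formula, the Laplacian comparison theorem, and the distance-distortion estimate $|\partial_t d| \lesssim \rho_2$ all transfer cleanly to the \emph{intrinsic} objects on the leaf $M'$ — this rests on the fact that $(M',g')$ is a genuine Riemannian manifold with curvature tensor $\overline{R}$ enjoying all the classical symmetries (guaranteed by \cite{KDN}), so that $M'$ behaves exactly like an ordinary complete Riemannian manifold; once this is granted the arguments are standard, but the bookkeeping of which connection ($\nabla$, $\nabla'$, or $\overline{\nabla}$) acts on which object must be done carefully because the ambient manifold is degenerate. Second, and more delicate, is tracking the sign and size of every curvature-derivative term produced when $g'$ itself evolves: under the \emph{backward} flow $\partial_t g' = +2Ric'$ several error terms flip sign relative to the forward-flow calculation, and one must confirm that the combination of $\rho_1$ (from $Scal'$), $\rho_2$ (from $Ric'$, entering both the zeroth-order error and the Laplacian/distance comparisons), and $\rho_3$ (from $\overline{\nabla} Scal'$) indeed closes up with nonnegative coefficients, matching the precise constants $1/t + c_2\rho_1 + 4\rho_2 + 2\rho_3 + \rho^{-2}(\rho c_1\sqrt{\rho_2} + c_2)$ in the conclusion.
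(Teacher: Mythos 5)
Your proposal follows essentially the same route as the paper's proof: your $w=\ln(A/u)$ and $1+w$ are exactly the paper's $f=\ln(u/A)$ and $1-f$ up to sign, your quantity $F=\|\overline{\nabla}w\|^{2}/(1+w)^{2}$ is precisely the paper's test function $\phi$, and the Bochner identity on the leaf, the evolution of $\|\overline{\nabla}w\|^{2}$ under the backward flow, the cut-off localization via Calabi's trick, the Laplacian comparison bounds on $\underline{\Delta}\varphi$ and $\|\overline{\nabla}\varphi\|^{2}/\varphi$, and the quadratic inequality at the maximum of $t\varphi F$ are all deployed exactly as in the paper. The strategy is correct and matches the paper's argument step for step.
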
	
		\begin{proof}
Let us define $f=\ln{\frac{u}{A}}$. It is easy to see that $1-f\geq 1$. Let $\phi=\left\|\overline{\nabla}{\text{ln}}(1-f)\right\|^{2}=\frac{\left\|\overline\nabla{f}\right\|^{2}}{(1-f)^{2}}$. Then, a straightforward calculation using (\ref{backward}) gives the evolution equation of $f$  as $\partial_{t}f=\underline{\Delta}f+\left\|\overline{\nabla}{f}\right\|^{2}-Scal'$. Next, we compute the evolution equation of $\phi$. To that end,  differentiating $\phi$ with respect to $t$ and using the evolution equation of $f$, gives 
			\begin{align}
			\partial_{t}\phi=\frac{\partial}{\partial{t}}\left(\frac{\left\|\overline{\nabla}{f}\right\|^{2}}{(1-f)^{2}} \right)
			=\frac{\partial_{t}\left\|\overline{\nabla}{f}\right\|^{2}}{(1-f)^{2}}+\frac{2\left\|\overline{\nabla}{f}\right\|^{2}\partial_{t}f}{(1-f)^{3}}.\label{m1}
			\end{align}
			On the other hand, the term $\partial_{t}(\left\|\overline{\nabla}f \right\|^{2})$ is given by
			\begin{align}\label{m2}
			\partial_{t}(\left\|\overline{\nabla}f \right\|^{2})
			&=(\partial_{t}g'^{ij})\partial_{i}f\partial_{j}f+2g'(\overline{\nabla}{f},\overline{\nabla}\partial_{t}f)\nonumber\\
			&=-2Ric'_{ij}\partial_{i}f\partial_{j}f+2g'(\overline{\nabla}{f},\overline{\nabla}(\underline{\Delta}{f}+\left\|\overline{\nabla}{f}\right\|^{2}-Scal'))\nonumber\\
			&=-2Ric'_{ij}\overline{\nabla}_{i}f\overline{\nabla}_{j}f+2g'(\overline{\nabla}{f},\overline{\nabla}\,\underline{\Delta}{f})+2g'(\overline{\nabla}{f},\overline{\nabla}\left\|\overline{\nabla}{f}\right\|^{2})\nonumber\\
			&-2g'(\overline{\nabla}{f},\overline{\nabla}\,{Scal'}),
			\end{align} 	
			in which we have used (\ref{backward}) and the evolution equation of $f$. Applying (\ref{m2}) to (\ref{m1}) and using the evolution equation of $f$, gives 
			\begin{align}\label{m4}
			\partial_{t}\phi=&\frac{-2Ric'_{ij}\overline{\nabla}_{i}f\overline{\nabla}_{j}f+2g'(\overline{\nabla}{f},\overline{\nabla}\,\underline{\Delta}{f})+2g'(\overline{\nabla}{f},\overline{\nabla}\left\|\overline{\nabla}{f}\right\|^{2})-2g'(\overline{\nabla}{f},\overline{\nabla}\,Scal')}{(1-f)^{2}}\nonumber\\
			+&\frac{2\left\|\overline{\nabla}{f}\right\|^{2}(\underline{\Delta}{f}+\left\|\overline{\nabla}{f}\right\|^{2}-Scal')}{(1-f)^{3}}\nonumber\\
			=&-\frac{2Ric'_{ij}\overline{\nabla}_{i}f\overline{\nabla}_{j}f}{(1-f)^{2}}+\frac{2f_{j}f_{jji}}{(1-f)^{2}}+\frac{2g'(\overline{\nabla}{f},\overline{\nabla}\left\|\overline{\nabla}{f}\right\|^{2})}{(1-f)^{2}}-\frac{2g'(\overline{\nabla}{f},\overline{\nabla}\,{Scal'})}{(1-f)^{2}}\nonumber\\+&\frac{2\left\|\overline{\nabla}{f}\right\|^{2}\underline{\Delta}{f}}{(1-f)^{3}}+\frac{2\left\|\overline{\nabla}{f}\right\|^{4}}{(1-f)^{3}}-\frac{2Scal'\left\|\overline{\nabla}{f}\right\|^{2}}{(1-f)^{3}}.
			\end{align} 
			In view of Bochner-Weitzenb\"{o}ck identity (see \cite{EDLP} for details), we have $\underline{\Delta}(\left\|\overline{\nabla}{f}\right\|^{2})=-2f^{2}_{ij}+2g'(\overline{\nabla}{f},\overline{\nabla}\,\underline{\Delta}{f})+2Ric'_{ij}f_{i}f_{j}$, such that (\ref{m2}) reduces to 
			\begin{align}\label{m3}
			(\partial_{t}&-\underline{\Delta})\left\|\overline{\nabla}{f}\right\|^{2}\nonumber\\
			&=-2f^{2}_{ij}+2g'(\overline{\nabla}{f},\overline{\nabla}\left\|\overline{\nabla}{f}\right\|^{2})-2g'(\overline{\nabla}{f},\overline{\nabla}\,Scal')-4Ric'_{ij}\overline{\nabla}_{i}f\overline{\nabla}_{j}f.
			\end{align}
			Next, we compute $\underline{\Delta}\phi$. To that end, a direct calculation gives 
			\begin{align}\label{m7}
			\overline{\nabla}{\phi}=\frac{2g'(\overline{\nabla}{f},\overline{\nabla}\,\overline{\nabla}{f})}{(1-f)^{2}}+\frac{2\left\|\overline{\nabla}{f}\right\|^{2}\overline{\nabla}{f}}{(1-f)^{3}}=\frac{2f_{i}f_{ij}}{(1-f)^{2}}+\frac{2f_{i}^{2}f_{j}}{(1-f)^{3}}.
			\end{align}
			Applying the definition of $\underline{\Delta}$ and relation (\ref{m7}), we get
			\begin{align*}
			& \underline{\Delta}{\phi}=\left(\frac{2f_{i}f_{ij}}{(1-f)^{2}} \right)_{j}+\left(\frac{2f_{i}^{2}f_{j}}{(1-f)^{3}} \right)_{j}\\&=\frac{2f_{ij}f_{ij}+f_{i}f_{ijj}}{(1-f)^{2}}+\frac{4(1-f)f_{i}f_{ij}f_{j}}{(1-f)^{4}}+\frac{4f_{i}f_{ij}f_{j}+2f_{i}^{2}f_{jj}}{(1-f)^{3}}+\frac{6f_{i}^{2}f_{j}^{2}(1-f)}{(1-f)^{6}}\\&=\frac{2f_{ij}^{2}}{(1-f)^{2}}+\frac{2f_{i}f_{ijj}}{(1-f)^{2}}+\frac{4f_{i}f_{ij}f_{j}}{(1-f)^{3}}+\frac{4f_{i}f_{ij}f_{j}}{(1-f)^{3}}+\frac{2f_{i}^{2}f_{ij}}{(1-f)^{3}}+\frac{6f_{i}^{2}f_{j}^{2}}{(1-f)^{4}},
			\end{align*}
			from which we deduce that
			\begin{align}\label{m8}
			(\partial_{t}-\underline{\Delta})\phi&=\frac{2Ric'_{ij}f_{i}f_{j}}{(1-f)^{2}}+\frac{2f_{j}f_{jji}}{(1-f)^{2}}+\frac{2g'(\overline{\nabla}{f},\overline{\nabla}\left\|\overline{\nabla}{f}\right\|^{2})}{(1-f)^{2}}-\frac{2g'(\overline{\nabla}{f},\overline{\nabla}\,Scal')}{(1-f)^{2}}\nonumber\\
			&+\frac{2\left\|\overline{\nabla}{f}\right\|^{4}}{(1-f)^{3}}-\frac{2Scal'\left\|\overline{\nabla}{f}\right\|^{2}}{(1-f)^{3}}-\frac{2f_{ij}^{2}}{(1-f)^{2}}-\frac{2f_{i}f_{ijj}}{(1-f)^{2}}-\frac{4f_{i}f_{ij}f_{j}}{(1-f)^{3}}\nonumber\\
			&-\frac{4f_{i}f_{ij}f_{j}}{(1-f)^{2}}-\frac{6f_{i}^{2}f_{j}^{2}}{(1-f)^{4}}.
			\end{align}
			Rearranging the terms in (\ref{m8}), gives
			\begin{align}\label{m9}
			&(\partial_{t}-\underline{\Delta})\phi\nonumber\\
			&=-\frac{2}{(1-f)^{2}}\left\{f^{2}_{ij}+\frac{2f_{i}f_{ij}f_{j}}{1-f}+\frac{f^{2}_{i}f^{2}_{j}}{(1-f)^{2}}\right\}-\left\{\frac{2Ric'_{ij}f_{i}f_{j}}{(1-f)^{2}}\right. +\frac{2f_{j}f_{jji}}{(1-f)^{2}}\nonumber\\
			&\left. -\frac{2f_{i}f_{ijj}}{(1-f)^{2}}\right\}-\left\{\frac{4f_{i}f_{ij}f_{j}}{(1-f)^{3}}-\frac{4f^{2}_{i}f^{2}_{j}}{(1-f)^{4}}+\frac{2g'(\overline{\nabla}{f},\overline{\nabla}\left\|\overline{\nabla}{f}\right\|^{2})}{(1-f)^{2}}\right\}\nonumber\\
			&-\left\{\frac{2g'(\overline{\nabla}{f},\overline{\nabla}\,Scal')}{(1-f)^{2}}-\frac{2Scal'\left\|\overline{\nabla}{f}\right\|^{2}}{(1-f)^{3}}+\frac{2\left\|\overline{\nabla}{f}\right\|^{4}}{(1-f)^{3}}\right\}.
			\end{align}
			We can simplify some terms in the braces of  (\ref{m9}) further. The first term becomes
			\begin{align*}
			-\frac{2}{(1-f)^{2}}\left\{f^{2}_{ij}+\frac{2f_{i}f_{ij}f_{j}}{(1-f)}+\frac{f^{2}_{i}f^{2}_{j}}{(1-f)^{2}}\right\}=-\frac{2}{(1-f)^{2}}\Big(f_{ij}+\frac{f_{i}f_{j}}{1-f}\Big)^{2}.
			\end{align*}
			Using the Ricci identity on the next three terms we have 
			\begin{align*}
			-\frac{2Ric'_{ij}f_{i}f_{j}}{(1-f)^{2}}+\frac{2f_{j}f_{jji}}{(1-f)^{2}}-\frac{2f_{i}f_{ijj}}{(1-f)^{2}}=-\frac{4Ric'_{ij}f_{i}f_{j}}{(1-f)^{2}},
			\end{align*}
			since the Ricci identity implies that $f_{j}f_{jji}-f_{i}f_{ijj}=f_{j}(f_{jji}-f_{ijj})=-Ric'_{ij}f_{i}f_{j}$.
			Also
			\begin{align*}
			-\frac{4f_{i}f_{ij}f_{j}}{(1-f)^{3}}-\frac{4f^{2}_{i}f^{2}_{j}}{(1-f)^{4}}= \frac{-2}{1-f}f_{j}\left(\frac{2f_{i}f_{ij}}{(1-f)^{2}}+\frac{2f^{2}_{i}f_{j}}{(1-f)^{3}}\right)=\frac{-2}{1-f}g'(\overline{\nabla}{f},\overline{\nabla}{\phi}).
			\end{align*}
			Similarly, we have 
			$2(1-f)^{-2}g'(\overline{\nabla}{f},\overline{\nabla}\left\|\overline{\nabla}{f}\right\|^{2})=2g'(\overline{\nabla}{f},\overline{\nabla}{\phi})$, and
			then the second to the last three terms gives
			\begin{align*}
			&-\frac{4f_{i}f_{ij}f_{j}}{(1-f)^{3}}-\frac{4f^{2}_{i}f^{2}_{j}}{(1-f)^{4}}+\frac{2g'(\overline{\nabla}{f},\overline{\nabla}\left\|\overline{\nabla}{f}\right\|^{2})}{(1-f)^{2}}\\
			&=\frac{-2}{1-f}g'(\overline{\nabla}{f},\overline{\nabla}{\phi})+2g'(\overline{\nabla}{f},\overline{\nabla}{\phi})=\frac{-2}{1-f}g'(\overline{\nabla}{f},\overline{\nabla}{\phi}).
			\end{align*}
			Putting all these together in (\ref{m9}), we get
			\begin{align*}
			(\partial_{t}-\underline{\Delta})\phi&=-\frac{2}{(1-f)^{2}}\Big(f_{ij}+\frac{f_{i}f_{j}}{1-f}\Big)^{2}-\frac{2f}{1-f}g'(\overline{\nabla}{f},\overline{\nabla}{\phi})\\
			&-\frac{4Ric'_{ij}f_{i}f_{j}}{(1-f)^{2}}-\frac{2g'(\overline{\nabla}{f},\overline{\nabla}\,Scal')}{(1-f)^{2}}-\frac{2Scal'\left\|\overline{\nabla}{f}\right\|^{2}}{(1-f)^{3}}-\frac{2\left\|\overline{\nabla}{f}\right\|^{4}}{(1-f)^{3}}.
			\end{align*}
			Then,  using the curvature conditions we are left with the following inequality
			\begin{align}\label{m10}
			(\partial_{t}-\underline{\Delta})\phi&\le-\frac{2f}{1-f}g'(\overline{\nabla}{f},\overline{\nabla}{\phi})+\frac{2}{(1-f)}\rho_{1}\phi+4\rho_{2}\phi\nonumber\\
			&+2\rho_{3}\phi^{\frac{1}{2}}-2(1-f)\phi^{2}.
			\end{align}
			Next, we  apply a cut-off function in order to derive the desired estimate. To that end,  let $\psi$ be a smooth cut function defined on $[0,\infty)$ such that $0\le \psi(s)\le  1$, with
			$\psi^{'}(s)\le 0$, $\psi^{''}(s)\ge c_{1}$ and  $|\psi'|^{2}/\psi\le -c_{2}$, for some constants $c_{1},c_{2}>0$, depending on the dimension of the manifold only. Let us define a distance function $d(p,x)$ between the point $p$ and $x$ such that
			$\varphi(x,t)=\varphi(d(p,x,t))=\psi\left(d_{g'(t)}(p,x)/\rho\right)$, for smooth function $\varphi:M'\times(0,T]\longrightarrow \mathbb{R}$. It is easily seen that $\varphi(x,t)$ has its support in the closure of $\mathcal{Q}_{2\rho,T}$. We note that $\varphi(x,t)$ is smooth at $(y,s)\in M'\times(0,T]$, whenever point $y$ does not either coincide with $p$ or fall in the cut locus of $p$, with respect to the metric $g'(y,s)$. In what follows, we consider the function $\varphi \phi$ supported in $\mathcal{Q}_{2\rho,T}\times[0,\infty)$ is $C^{2}$ at the maxima, in which such assumption is supported  by a standard argument by Calabi known as  Calabi's trick (see \cite{EC} for more details). This approach is used in \cite{JCST}, see also \cite{EDLP, RS, QIS}. Therefore, we obtain
			\begin{align}\label{m11}
			\frac{\left \|\overline{\nabla}\varphi\right \|^{2}}{\varphi}=\frac{\left \|\psi'\right \|^{2}\left \|\overline{\nabla} d\right \|^{2}}{\rho^{2}\varphi}\leq \frac{c_{2}}{\rho^{2}},\quad\quad \frac{\partial \varphi}{\partial t}\leq c_{2}\rho_{1},
			\end{align}
			and by the Laplacian Comparison Theorem (see \cite{AA} for more details), we have 
			\begin{align}\label{f13}	\underline{\Delta} \varphi&=\frac{\psi^{'}\underline\Delta d}{\rho}+\frac{\psi^{''}\left\|\overline{\nabla}d\right\|^{2}}{\rho} \ge \frac{c_{1}}{\rho}(n-1)\sqrt{\rho_{2}}\;\coth(\sqrt{\rho_{2}}\rho)-\frac{c_{2}}{\rho^{2}},
			\end{align}
			which implies that $-\underline{\Delta} \varphi \le \frac{1}{\rho^{2}}(c_{1}\sqrt{\rho_{2}}\rho\;\coth(\sqrt{\rho_{2}}\rho)+c_{2})$.
			
			Let $(x_{0},t_{0})$ be a point in $\mathcal{Q}_{2\rho,T}$ at which $F=\varphi \phi$ attains its maximum value. At this point we have to assume that $F$ is positive, since $F=0$ implies that $\varphi \phi(x_{0},t_{0})=0$ and hence, $\phi(x,t)=0$, for all $x\in M$. Then, we $d(x,x_{0},t)<2\rho$, this yields $\overline{\nabla}u(x,t)=0$ and the theorem will follow trivially at $(x,t)$. The approach here is to estimate $(\partial_{t}-\underline{\Delta})(tF)$ and do some analyses on the result at the maximum point. To that end, we have
			\begin{align}\label{m15}
			(\partial_{t}-\underline{\Delta})(tF)&=F+t(\partial_{t}-\underline{\Delta})(\varphi \phi)\nonumber\\
			&= F-2tg'(\overline{\nabla}\varphi,\overline{\nabla} \phi)+t\varphi(\partial_{t}-\underline{\Delta})\phi+t\phi(\partial_{t}-\underline{\Delta})\varphi.
			\end{align}
			Note that at the maximum point $(x_{0},t_{0})$, we have by Derivative Test that 
			\begin{align}\label{m17}
			\underline{\Delta}F(x_{0},t_{0})=0,\;\;\; \partial_{t} F(x_{0},t_{0})\ge 0\;\; \mbox{and}\;\; \underline{\Delta}F(x_{0},t_{0})\le 0.
			\end{align}
			Taking $tF$ on $M'\times (0,T]$, we have $(\partial_{t}-\underline{\Delta})(tF)\geq 0$, whenever $(tF)$ achieves its maximum. Similarly, by this argument, we have
			$\overline{\nabla}(\varphi\phi)(x_{0},t_{0})-\phi\overline{\nabla}\varphi(x_{0},t_{0})=\varphi\overline{\nabla}\phi(x_{0},t_{0})$, which means $\varphi\overline{\nabla}\phi$ can always be replaced by $-\phi\overline{\nabla}\varphi$. By (\ref{m10}), (\ref{m15}) and (\ref{m17}), one has
			\begin{align*}
			F &-2tg'(\overline{\nabla}\varphi,\overline{\nabla}\phi)+t\varphi -2f(1-f)^{-1}g'(\overline{\nabla}f,\overline{\nabla}\phi)+2(1-f)^{-1}\rho_{1}\phi\\
			&+4\rho_{2}\phi+2\rho_{3}\phi^{\frac{1}{2}}+2(1-f)\phi^{2}+t\phi(\partial_{t}-\underline{\Delta})\varphi\ge 0.
			\end{align*}
			Taking $0\leq \varphi\leq1$ and noticing that $\frac{1}{1-f}\leq 1$, then the last inequality becomes
			\begin{align*}
			F&-2t g'\left(\frac{\overline{\nabla}\varphi}{\varphi}, \overline{\nabla}(\varphi\phi)\right)-2t\frac{\left\|\overline{\nabla}\varphi\right\|^{2}}{\varphi}\phi		+t\{2\rho_{1}F+4\rho_{2}F+2\rho_{3}F^{\frac{1}{2}}\\
			&+2(1-f)F^{2}\} -2tf(1-f)^{-1}\left\|\overline{\nabla}f\right\|\left\|\overline{\nabla}\varphi\right\|\phi+t\phi(\partial_{t}-\underline{\Delta})\varphi\ge 0.
			\end{align*}
			
			Applying the following relation $2\rho_{3}F^{\frac{1}{2}}\leq \rho_{3}F+\rho_{3}$ (see \cite{EDLP} for details) and by the Young's inequality, we derive
			\begin{align*}
			-2tf(1-f)^{-1}\left\|\overline{\nabla}f\right\|\left\|\overline{\nabla}\varphi\right\|\phi &\leq t\phi\left(\frac{\left\|\overline{\nabla}f\right\|^{2}}{1-f}\varphi +\frac{\left\|\overline{\nabla}\varphi\right\|^{2}}{\varphi}\frac{f^{2}}{1-f}\right)\\&=t(1-f)F^{2}+tF\frac{c_{2}}{\rho^{2}}\frac{f^{2}}{1-f}.
			\end{align*}
			Notice also that by bounds given in (\ref{m11}) and (\ref{f13}), we have 
			\begin{align*}
			t\phi(\partial_{t}-\underline{\Delta})\varphi &\leq t\phi\left(c_{2}\rho_{1}+\frac{1}{\rho^{2}}\Big(c_{1}\sqrt{\rho_{2}}\;\coth\;(\sqrt{\rho_{2}\rho})+c_{2}\Big)\right)\\&\leq t\phi\left( c_{2}\rho_{1}+\frac{1}{\rho^{2}}\Big(\rho c_{1}\sqrt{\rho_{2}}+c_{2}\Big)\right).
			\end{align*}
			Putting these together and dividing through by $(1-f)$, while noticing that $\frac{1}{1-f}\leq  1$ and $\frac{-f}{1-f}\leq 1$, we have 
			\begin{align*}
			F &-2t\frac{c_{2}}{\rho^{2}}F+t(2\rho_{1}+4\rho_{2}+\rho_{3})F+t\rho_{3}+tF\frac{c_{2}}{\rho^{2}}-tF^{2}\\&+t\phi\left( c_{2}\rho_{1}+\frac{1}{\rho^{2}}\Big(\rho c_{1}\sqrt{\rho_{2}}+c_{2}\Big)\right)\ge 0.
			\end{align*}
			Therefore, we have 
			\begin{align*}
			tF^{2}&\leq F+t(2\rho_{1}+4\rho_{2}+\rho_{3})F+t\left(c_{2}\rho_{1}+\frac{1}{\rho^{2}}\Big(\rho c_{1}\sqrt{\rho_{2}}+c_{2}\Big)\right)F\\
			&-t\frac{c_{2}}{\rho^{2}}F+t\rho_{3}.
			\end{align*}
			Then,
			\begin{align*}
			F^{2}&\leq F\left\lbrace \frac{1}{t}+c_{2}\rho_{1}+4\rho_{2}+\rho_{3}+\frac{1}{\rho^{2}}\Big(\rho c_{1}\sqrt{\rho_{2}}+c_{2}\Big)+\rho_{3}\right\rbrace +\rho_{3}.
			\end{align*}
			From here we can conclude that 
			\begin{align*}
			F\leq \frac{1}{t}+c_{2}\rho_{1}+4\rho_{2}+\rho_{3}+\sqrt{\rho_{3}}+\frac{1}{\rho^{2}}\Big(\rho c_{1}\sqrt{\rho_{2}}+c_{2}\Big)
			\end{align*}
			at $(x_{0},t_{0})$. Therefore,
			\begin{align}
			\left\|\overline{\nabla}f\right\|^{2}\leq(1-f)^{2}\left(\frac{1}{t}+c_{2}\rho_{1}+4\rho_{2}+2\rho_{3}+\frac{1}{\rho^{2}}\Big(\rho c_{1}\sqrt{\rho_{2}}+c_{2}\Big)\right),\nonumber
			\end{align}
			which completes the proof.
		\end{proof}
	The boundedness assumption may be weakened in the case of the forward degenerate Ricci-type flow. Therefore, we have the following.   
	\begin{theorem}\label{Theorem11}
     Let $(M, g)$ be a screen integrable globally null manifold and $(M',g')$ be a leaf of $S(T M)$. Suppose that $(M',g'(t))$ (with $t\in(0, T]$) be a complete solution to the forward degenerate Ricci-type flow with $Scal'\geq{-\rho_{1}}$ and  $\left \|\overline{\nabla}\,Scal'\right \|\leq{\rho_{3}}$, for some constants $\rho_{1}$, $\rho_{3}\ge 0$. Let $u=u(x,t)$ be any positive solution to the heat equation defined in $\mathcal{Q}_{2\rho,T}\subset(M'\times(0,T])$ satisfying $0<u\le A$. Then exist absolute constants $c_{1},c_{2}$ depending on $n$ such that
		
		$$\frac{\left\|\overline{\nabla} u\right\|^{2}}{u^{2}}\le\left(1+\ln\Big(\frac{A}{u}\Big)\right)^{2}\left(\frac{1}{t}+c_{2}\rho_{1}+2\rho_{3}+\frac{c_{2}}{\rho^{2}}\right).$$
	\end{theorem}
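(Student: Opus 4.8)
The plan is to repeat the argument of the previous theorem, the only structural difference being that the metric now satisfies the forward equation $\partial_{t}g'=-2Ric'$, hence $\partial_{t}g'^{ij}=+2Ric'^{ij}$. As before, put $f=\ln\frac{u}{A}$, so that $1-f\ge 1$, and set $\phi=\|\overline{\nabla}\ln(1-f)\|^{2}=\|\overline{\nabla}f\|^{2}/(1-f)^{2}$. Since $u$ solves the conjugate heat equation in the second system of (\ref{backward}), and this computation does not see the metric evolution, we again obtain $\partial_{t}f=\underline{\Delta}f+\|\overline{\nabla}f\|^{2}-Scal'$.

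The key observation is what happens in the Bochner step. With the forward sign, $\partial_{t}\|\overline{\nabla}f\|^{2}=2Ric'_{ij}f_{i}f_{j}+2g'(\overline{\nabla}f,\overline{\nabla}\partial_{t}f)$, and when this is combined with the Bochner--Weitzenb\"ock identity the Ricci contribution coming from the time derivative cancels exactly against the Ricci contribution coming from the Laplacian, so that
\begin{align*}
(\partial_{t}-\underline{\Delta})\|\overline{\nabla}f\|^{2}=-2f_{ij}^{2}+2g'(\overline{\nabla}f,\overline{\nabla}\|\overline{\nabla}f\|^{2})-2g'(\overline{\nabla}f,\overline{\nabla}Scal'),
\end{align*}
with no free Ricci term. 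Propagating this through the computation of $\overline{\nabla}\phi$ and $\underline{\Delta}\phi$ exactly as in the proof of the previous theorem --- one still uses the Ricci identity $f_{j}(f_{jji}-f_{ijj})=-Ric'_{ij}f_{i}f_{j}$, but the $Ric'$ terms it produces are again annihilated by the corresponding term coming from $\partial_{t}g'^{ij}$ --- and completing the square in the Hessian, one is led to
\begin{align*}
(\partial_{t}-\underline{\Delta})\phi\le-\frac{2f}{1-f}g'(\overline{\nabla}f,\overline{\nabla}\phi)+\frac{2}{1-f}\rho_{1}\phi+2\rho_{3}\phi^{\frac{1}{2}}-2(1-f)\phi^{2},
\end{align*}
which is precisely (\ref{m10}) with the term $4\rho_{2}\phi$ deleted; this is exactly why the hypothesis $Ric'\ge-\rho_{2}$ is no longer needed.

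From here the localization step is carried out as before: take the cut-off $\varphi(x,t)=\psi(d_{g'(t)}(p,x)/\rho)$ supported in $\overline{\mathcal{Q}_{2\rho,T}}$, let $(x_{0},t_{0})$ be a maximum point of $F=\varphi\phi$ in $\mathcal{Q}_{2\rho,T}$ (assuming $F(x_{0},t_{0})>0$, since otherwise $\overline{\nabla}u$ vanishes there and there is nothing to prove), and compute $(\partial_{t}-\underline{\Delta})(tF)\ge 0$ using $\underline{\Delta}F(x_{0},t_{0})\le 0$, $\partial_{t}F(x_{0},t_{0})\ge 0$ and $\varphi\overline{\nabla}\phi=-\phi\overline{\nabla}\varphi$. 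Substituting the differential inequality for $\phi$, estimating the cross term $-2tf(1-f)^{-1}\|\overline{\nabla}f\|\,\|\overline{\nabla}\varphi\|\phi$ by Young's inequality, using $2\rho_{3}F^{\frac{1}{2}}\le\rho_{3}F+\rho_{3}$ together with the cut-off bounds $\|\overline{\nabla}\varphi\|^{2}/\varphi\le c_{2}/\rho^{2}$, $\partial_{t}\varphi\le c_{2}\rho_{1}$, $-\underline{\Delta}\varphi\le c_{2}/\rho^{2}$, and finally $\tfrac{1}{1-f}\le 1$, $\tfrac{-f}{1-f}\le 1$, reduces everything to a quadratic inequality in $F$ of the form $tF^{2}\le\{1+t(c_{2}\rho_{1}+2\rho_{3}+c_{2}/\rho^{2})\}F+t\rho_{3}$, whose solution gives $F(x_{0},t_{0})\le\frac{1}{t}+c_{2}\rho_{1}+2\rho_{3}+\frac{c_{2}}{\rho^{2}}$ (after absorbing $\sqrt{\rho_{3}}$ into $2\rho_{3}$ as in the previous proof). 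Since $\varphi\equiv 1$ on $\mathcal{Q}_{\rho,T}$, this bounds $\phi=\|\overline{\nabla}f\|^{2}/(1-f)^{2}$ there, and rewriting $\|\overline{\nabla}f\|^{2}=\|\overline{\nabla}u\|^{2}/u^{2}$ and $1-f=1+\ln(A/u)$ yields the assertion.

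The one point genuinely needing attention, as opposed to routine transcription, is the cut-off estimate on $-\underline{\Delta}\varphi$: in the previous theorem this was obtained from the Laplacian comparison theorem applied with the lower bound $Ric'\ge-\rho_{2}$, and it is responsible for the terms $\frac{1}{\rho^{2}}(\rho c_{1}\sqrt{\rho_{2}}+c_{2})$ appearing there. Since no lower Ricci bound is assumed here, one must instead produce $-\underline{\Delta}\varphi\le c_{2}/\rho^{2}$, and control $\partial_{t}\varphi$ under the forward flow, without it --- for instance by building $\varphi$ from the distance function at a fixed time, retaining only the $\psi''$-term, and invoking the Calabi-type argument used in \cite{AA,EDLP}. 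Once this is secured, the estimate follows exactly as above.
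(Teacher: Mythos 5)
Your proposal is correct and follows essentially the same route as the paper: the paper's own proof of Theorem~\ref{Theorem11} consists precisely of observing that under the forward flow $\partial_{t}g'^{ij}=+2Ric'^{ij}$, so the Ricci terms produced by the metric evolution cancel against those produced by the Bochner--Weitzenb\"ock and Ricci identities, the term $4\rho_{2}\phi$ drops out of (\ref{m10}), and the localization argument of Theorem~\ref{Theorem10} is then repeated verbatim. Your closing remark about $-\underline{\Delta}\varphi$ is well taken --- the paper silently keeps the bound $c_{2}/\rho^{2}$ even though the Laplacian comparison step in (\ref{f13}) used the lower Ricci bound that is no longer assumed --- but this is an issue inherited from the paper's own argument rather than a divergence from it, and your treatment of the cut-off is at least as careful as the original.
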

		\begin{proof}
		The proof of this theorem is similar to that of Theorem \ref{Theorem10}. The disparity between the estimate inequalities in Theorem \ref{Theorem10} and Theorem \ref{Theorem11} arises in some calculation which we briefly point out here. Similary, set $f=\ln\frac{u}{A}$ and $\phi=\left\|\overline{\nabla}{\text{ln}}(1-f)\right\|^{2}=\frac{\left\|\overline\nabla{f}\right\|^{2}}{(1-f)^{2}}$. Notice that $g'(t)$ evolves by the forward degenerate Ricci flow, where the inverse metric evolves as $\partial_{t}(g'^{ij})=2Ric'_{ij}$ and then 
		\begin{align}
		\partial_{t}(\left\|\overline{\nabla}f \right\|^{2})
		&=2Ric'_{ij}\overline{\nabla}_{i}f\overline{\nabla}_{j}f+2g'(\overline{\nabla}{f},\overline{\nabla}\,\underline{\Delta}{f})+2g'(\overline{\nabla}{f},\overline{\nabla}\left\|\overline{\nabla}{f}\right\|^{2})\nonumber\\&-2g'(\overline{\nabla}{f},\overline{\nabla}\,{Scal'}),  
		\end{align} 
		hence, the counterpart of (\ref{m9}) is	
		\begin{align}
		&(\partial_{t}-\underline{\Delta})\phi\nonumber\\
		&=-\frac{2}{(1-f)^{2}}\left\{f^{2}_{ij}+\frac{2f_{i}f_{ij}f_{j}}{1-f}+\frac{f^{2}_{i}f^{2}_{j}}{(1-f)^{2}}\right\}+\left\{\frac{2Ric'_{ij}f_{i}f_{j}}{(1-f)^{2}}\right. +\frac{2f_{j}f_{jji}}{(1-f)^{2}}\nonumber\\
		&\left. -\frac{2f_{i}f_{ijj}}{(1-f)^{2}}\right\}-\left\{\frac{4f_{i}f_{ij}f_{j}}{(1-f)^{3}}-\frac{4f^{2}_{i}f^{2}_{j}}{(1-f)^{4}}+\frac{2\mathcal{g}(\overline{\nabla}{f},\overline{\nabla}\left\|\overline{\nabla}{f}\right\|^{2})}{(1-f)^{2}}\right\}\nonumber\\
		&-\left\{\frac{2g'(\overline{\nabla}{f},\overline{\nabla}\,Scal')}{(1-f)^{2}}-\frac{2Scal'\left\|\overline{\nabla}{f}\right\|^{2}}{(1-f)^{3}}+\frac{2\left\|\overline{\nabla}{f}\right\|^{4}}{(1-f)^{3}}\right\}.
		\end{align}
		Now, using the Ricci identity, some braced terms  vanish. Then, we can conclude that
			\begin{align*}
			(\partial_{t}-\underline{\Delta})\phi&=-\frac{2}{(1-f)^{2}}\Big(f_{ij}+\frac{f_{i}f_{j}}{1-f}\Big)^{2}-\frac{2f}{1-f}g'(\overline{\nabla}{f},\overline{\nabla}{\phi})\\
			&+\frac{4Ric'_{ij}f_{i}f_{j}}{(1-f)^{2}}-\frac{2g'(\overline{\nabla}{f},\overline{\nabla}\,Scal')}{(1-f)^{2}}-\frac{2Scal'\left\|\overline{\nabla}{f}\right\|^{2}}{(1-f)^{3}}-\frac{2\left\|\overline{\nabla}{f}\right\|^{4}}{(1-f)^{3}}.
			\end{align*}
		Using the curvature conditions, we are left with the following inequality as in (\ref{m10})
			\begin{equation}
			(\partial_{t}-\underline{\Delta})\phi \le-\frac{2f}{1-f}g'(\overline{\nabla}{f},\overline{\nabla}{\phi})+\frac{2}{(1-f)}\rho_{1}\phi +2\rho_{3}\phi^{\frac{1}{2}}-2(1-f)\phi^{2}.\nonumber
			\end{equation}
The rest of the proof follows as in the one of Theorem \ref{Theorem10}.
		\end{proof}

	\section{Gradient estimates on forward heat equation}\label{gradientest2}

	Let $(M, g)$ be a screen integrable globally null manifold and $(M',g')$ be an $n$-dimensional complete Riemannian integral manifold of the screen distribution $S(TM)$ without boundary. Next, we discuss space-time gradient estimates for positive solutions of the forward heat equation along the degenerate Ricci flow. More precisely, we consider 
	\begin{align}\label{eq4.1}
	\partial_{t}g'(x,t)=-2Ric'(x,t),\;\;\;\;(x,t)\in M'\times(0,T], 
	\end{align}
	coupled to
	\begin{align}\label{eq4.2}
	\left(\underline{\Delta}-\partial_{t}\right)u(x,t)=0,\;\;\;\; (x,t)\in M'\times(0,T].
	\end{align}
	As mentioned in \cite{AA}, in general, our degenerate version of our estimate is local one too we also obtain it in the interior of geodesic cube. We will show how this local estimate can lead to achieving a global one. We start by proving the following lemma, which  is useful  to this section. Let us define  the geodesic cube 
	$$
	\mathcal{Q}_{2\rho,T}:= \{(x,t)\in M'\times(0,T]:d(x,x_{0},t)\leq \rho\}.
	$$
	\begin{lemma}\label{lemma4.1}
	Let $(M, g)$ be a screen integrable globally null manifold and $(M',g')$ be a leaf of $S(T M)$.  Suppose that $(M',g'(t))$ is a complete solution to the forward degenerate Ricci flow in some time interval $(0,T]$, and $-\rho_{1}g'\leq Ric'\leq\rho_{2}g'$, for some positive constants $\rho_{1}$ and $\rho_{2}$. For any smooth positive solution $u\in C^{2,1}(M'\times(0,T])$  to the heat equation in the geodesic cube $\mathcal{Q}_{2\rho,T}$, it holds that 
		\begin{align}
		(\underline{\Delta}-\partial_{t})G\geq &-2g'(\overline{\nabla}f,\overline{\nabla}G)-(\left\|\overline{\nabla}f\right\|^{2}-\alpha \partial_{t}f)-2\alpha t\rho_{1}\left\|\overline{\nabla}f\right\|^{2}\nonumber\\
		&+\frac{2\alpha}{np}t(\left\|\overline{\nabla}f\right\|^{2}-\partial_{t}f)^{2}-\frac{\alpha n q}{2}t(\rho_{1}+\rho_{2})^{2},
		\end{align}
		where $f=\ln{u}$, $G=t(\left\|\overline{\nabla}f\right\|^{2}-\alpha \partial_{t}f)$ and $\alpha\geq 1$ are given such that $\frac{1}{p}+\frac{1}{q}=\frac{1}{\alpha}$, for any real numbers $p,q> 0$.
	\end{lemma}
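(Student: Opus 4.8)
The plan is to carry out the standard Li--Yau--Hamilton type parabolic Bochner computation, now performed intrinsically on the leaf $M'$ with respect to the Levi--Civita connection $\nabla'$ and Laplacian $\underline{\Delta}$ supplied by Theorem \ref{theorem2.1} and the Kupeli formalism. Set $f=\ln u$; since $u$ solves $(\underline{\Delta}-\partial_t)u=0$ on $M'\times(0,T]$, a direct calculation gives the evolution equation $\partial_t f = \underline{\Delta} f + \left\|\overline{\nabla} f\right\|^2$. Introduce the Li--Yau quantity $H:=\left\|\overline{\nabla} f\right\|^2 - \alpha\,\partial_t f$ and $G=tH$. The first step is to compute $(\underline{\Delta}-\partial_t)H$ by differentiating $\left\|\overline{\nabla} f\right\|^2$ and $\partial_t f$ separately: for $\left\|\overline{\nabla} f\right\|^2$ one invokes the Bochner--Weitzenb\"ock identity already recorded in the proof of Theorem \ref{Theorem10}, namely $\underline{\Delta}\left\|\overline{\nabla} f\right\|^2 = 2 f_{ij}^2 + 2 g'(\overline{\nabla} f,\overline{\nabla}\underline{\Delta} f) + 2 Ric'_{ij} f_i f_j$, together with the flow term $\partial_t(g'^{ij})=2Ric'_{ij}$ (forward degenerate Ricci flow, equation (\ref{eq4.1})) contributing $2 Ric'_{ij} f_i f_j$; for $\partial_t f$ one commutes $\partial_t$ with $\underline{\Delta}$, producing a further $2 Ric'_{ij} f_i f_j$ term. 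Collecting everything yields
\begin{align*}
(\underline{\Delta}-\partial_t)H &= 2 f_{ij}^2 - 2 g'(\overline{\nabla} f,\overline{\nabla} H) + 2(1-\alpha)Ric'_{ij}f_i f_j + 2\alpha\, Ric'_{ij}f_i f_j \\
&\quad + (\text{lower-order terms involving } \alpha, Ric').
\end{align*}

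Next I would multiply by $t$ to pass to $G=tH$, picking up the extra term $-H = -G/t$, so that $(\underline{\Delta}-\partial_t)G = t(\underline{\Delta}-\partial_t)H - H$. The heart of the estimate is the algebraic inequality bounding the Hessian term from below: writing $\underline{\Delta} f = \partial_t f - \left\|\overline{\nabla} f\right\|^2$, one splits the index range $\{1,\dots,n\}$ into the identity direction and its complement and applies Cauchy--Schwarz in the sharp form $f_{ij}^2 \ge \frac{(\underline{\Delta} f)^2}{n}$; but to gain the improved constant $\frac{2\alpha}{np}$ one instead distributes the trace via the elementary inequality $(a+b)^2 \le p\,a^2 + \frac{p}{p-1}b^2$ (equivalently Young with exponents adding to $1/\alpha$), splitting $\underline{\Delta} f$ into its "good" part $-(\left\|\overline{\nabla} f\right\|^2 - \partial_t f) = -H/\alpha \cdot (\dots)$ and the part carrying the flow/$Ric'$ contribution. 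This is precisely where the hypothesis $\frac{1}{p}+\frac{1}{q}=\frac{1}{\alpha}$ enters: $q$ absorbs the curvature-tensor term, which one then estimates in absolute value by $\left\|Ric'\right\|\le \sqrt{n}\max(\rho_1,\rho_2)\le \sqrt{n}(\rho_1+\rho_2)$ componentwise, and the sum of $q$ copies of its square gives the $-\frac{\alpha n q}{2}t(\rho_1+\rho_2)^2$ term. The bound $Ric'\ge -\rho_1 g'$ applied to the surviving $Ric'_{ij}f_i f_j$ term produces $-2\alpha t\rho_1\left\|\overline{\nabla} f\right\|^2$, and $-H$ together with the quadratic remainder assembles into $-(\left\|\overline{\nabla} f\right\|^2-\alpha\partial_t f)$ plus $\frac{2\alpha}{np}t(\left\|\overline{\nabla} f\right\|^2-\partial_t f)^2$.

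The main obstacle I anticipate is twofold. First, one must be scrupulous that all the commutation identities --- Bochner, the Ricci identity $f_{jji}-f_{ijj}=-Ric'_{ij}f_j$, and $[\partial_t,\underline{\Delta}]$ --- are legitimate on the leaf $M'$; this is exactly what the earlier sections guarantee, since $\overline{R}$ "satisfies the usual properties of Riemannian curvature tensors" (Kupeli, cited after the definition of $\overline{R}$) and $\nabla'$ is a genuine Levi--Civita connection on the non-degenerate $(M',g')$, so the computation is formally identical to the Riemannian case and no degeneracy issue arises once we have descended to $M'$. Second, the bookkeeping in the Young-inequality splitting is delicate: one has to choose the right splitting of $\underline{\Delta} f = -\frac{1}{\alpha}H + (1-\tfrac1\alpha)\left\|\overline{\nabla} f\right\|^2 + \dots$ so that the square $(\left\|\overline{\nabla} f\right\|^2-\partial_t f)^2$ (not $H^2$) emerges with coefficient $\frac{2\alpha}{np}$, and to track the flow term $2Ric'_{ij}f_if_j$ coming from $\partial_t g'^{ij}$ so that it merges correctly with the Bochner $Ric'$ term rather than being double-counted. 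Once these two points are handled, the claimed differential inequality for $G$ follows by collecting terms, and there is nothing further to prove in the lemma itself.
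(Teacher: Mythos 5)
Your proposal follows essentially the same route as the paper's proof: the Bochner--Weitzenb\"ock identity on the leaf $M'$, the flow contribution $\partial_t g'^{ij}=2Ric'_{ij}$ and the commutator $[\partial_t,\underline{\Delta}]$ acting through $-2Ric'_{ij}f_{ij}$, the extra $-G/t$ from passing to $G=tH$, the Cauchy--Schwarz bound $f_{ij}^2\ge(\underline{\Delta}f)^2/n$ combined with the splitting $2f_{ij}^2+2\alpha Ric'_{ij}f_{ij}=\tfrac{2\alpha}{p}f_{ij}^2+2\alpha(\tfrac1q f_{ij}^2+Ric'_{ij}f_{ij})\ge\tfrac{2\alpha}{p}f_{ij}^2-\tfrac{\alpha q}{2}(Ric'_{ij})^2$ using $\tfrac1p+\tfrac1q=\tfrac1\alpha$, and the bound $\sup\|Ric'\|^2\le n(\rho_1+\rho_2)^2$, followed by substituting $\underline{\Delta}f=\partial_tf-\|\overline{\nabla}f\|^2$. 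The two cautionary points you flag (validity of the commutation identities on $M'$ and the bookkeeping of the $Ric'_{ij}f_if_j$ terms) are exactly the points the paper handles, so nothing essential is missing.
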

		\begin{proof}
		We have 
		$
		\underline{\Delta}G=t(\underline{\Delta}\left\|\overline{\nabla}f\right\|^{2}-\alpha\underline{\Delta} \partial_{t}f)$ and $\partial_{t}G=(\left\|\overline{\nabla}f\right\|^{2}-\alpha \partial_{t}f)+t\partial_{t}(\left\|\overline{\nabla}f\right\|^{2}-\alpha \partial_{t}f).
		$
		Working in local coordinates system at any point $x\in M'$ and using Einstein summation convection where repeated indices are summed up. We have by Bochner-Weitzenb\"{o}ck's identity
			$
			\underline{\Delta}\left\|\overline{\nabla}f\right\|^{2}=2f^{2}_{ij}+2f_{j}f_{jji}+2Ric'_{ij}f_{i}f_{j}.
			$
			By the hypothesis of the lemma that $g'(x,t)$ evolves by the degenerate Ricci flow we have $
			\partial_{t}(\left\|\overline{\nabla}f\right\|^{2})=(\partial_{t}g'^{ij})\partial_{i}f\partial_{j}f+2g'^{ij}\partial_{i}f\partial_{j}\partial_{t}f=2Ric'_{ij}f_{i}f_{j}+2f_{i}f_{t,i}.
			$
			Similarly, $
			\partial_{t}(\underline{\Delta}f)=(\partial_{t}g'^{ij})\partial_{i}\partial_{j}f+g'^{ij}\partial_{i}\partial_{j}\partial_{t}f\nonumber=2Ric'_{ij}f_{ij}+\underline{\Delta}(\partial_{t}f)$, which implies that $ \underline{\Delta}(\partial_{t}f)=\partial_{t}(\underline{\Delta}f)-2Ric'_{ij}f_{ij}$.
			With the above computations, we obtain the following
			\begin{align*}
			\underline{\Delta}G &=t\left(2f_{ij}^{2}+2f_{i}f_{jji}+2Ric'_{ij}f_{i}f_{j}+2\alpha Ric'_{ij}f_{ij}-\alpha\partial_{t}(\underline{\Delta}f)\right)\\&\geq t\left[\left(2f_{ij}^{2}+\alpha Ric'_{ij}f_{ij}\right)+2f_{j}f_{jji}-2\rho_{1}\left\|\overline{\nabla}f\right\|^{2}-\alpha\partial_{t}(\underline{\Delta}f)\right],
			\end{align*}
			at an arbitrary point $(x,t)\in\mathcal{Q}_{2\rho,T}$.
			
			On the other hand 
			$
			\partial_{t}G=\left(\left\|\overline{\nabla}f\right\|^{2}-\alpha\partial_{t}f\right)+t\left(2Ric'_{ij}f_{i}f_{j}+2f_{i}f_{t,i}-\alpha \partial_{t}\partial_{t}f\right).
			$
			Noticing that $f=\ln{u}$ implies the evolution
			$
			\partial_{t}f=\underline{\Delta}f+\left\|\overline{\nabla}f\right\|^{2},\; (x,t)\in\mathcal{Q}_{2\rho,T}.
			$
			Therefore, we have
			\begin{align*}
			(\underline{\Delta}-&\partial_{t})G\geq t\left[(2f_{ij}^{2}+2\alpha Ric'_{ij}f_{ij})+2f_{j}f_{jji}-2\rho_{1}\left\|\overline{\nabla}f\right\|^{2}-\alpha\partial_{t}(\underline{\Delta}f)\right]\\&-\left[(\left\|\overline{\nabla}f\right\|^{2}-\alpha\partial_{t}f)+t\partial_{t}(\left\|\overline{\nabla}f\right\|^{2}-\alpha\partial_{t}f)\right]\\&=t(2f_{ij}^{2}+2\alpha Ric'_{ij}f_{ij})-2g'(\overline{\nabla}f,\overline{\nabla}G)-(\left\|\overline{\nabla}f\right\|^{2}-\alpha \partial_{t}f)-2\alpha t\rho_{1}\left\|\overline{\nabla}f\right\|^{2}.
			\end{align*}
			Now choosing any two real numbers $p,q>0$ such that $\frac{1}{p}+\frac{1}{q}=\frac{1}{\alpha}$, we can write
			\begin{align*}
			2f_{ij}^{2}+2\alpha Ric'_{ij}f_{ij}&=\frac{2\alpha}{p}f_{ij}^{2}+2\alpha\left(\frac{1}{q}f_{ij}^{2}+ Ric'_{ij}f_{ij}\right)\geq\frac{2\alpha}{p}f_{ij}^{2}-\frac{\alpha q}{2}{Ric'}_{ij}^{2}, 
			\end{align*}
			where we have used completing the square method to arrive at the last inequality. Also by Cauchy-Schwarz inequality,
			$
			(\underline{\Delta}f)^{2}=\left\|g'^{ij}\partial_{i}\partial_{j}f\right\|^{2}\leq nf^{2}_{ij}
			$
			holds at an arbitrary point $(x,t)\in\mathcal{Q}_{2\rho,T}$, therefore we have $f^{2}_{ij}\geq\frac{1}{n}(\underline{\Delta}f)^{2}$. We can also write the boundedness condition on the degenerate Ricci curvature as $-(\rho_{1}+\rho_{2})g'\leq Ric'_{ij}\leq(\rho_{1}+\rho_{2})g'$ so that 
			$
			\underset{M}{\sup}\left\|Ric'_{ij}\right\|^{2}\leq n(\rho_{1}+\rho_{2})^{2},
			$
			since the Ricci curvature tensor is symmetric. Therefore, we have
			\begin{align}
			2t\left(f_{ij}^{2}+2\alpha Ric'_{ij}f_{ij}\right)\geq \frac{2\alpha}{np}t(\underline{\Delta}f)^{2}-\frac{\alpha n q}{2}t(\rho_{1}+\rho_{2})^{2}.
			\end{align}
			Hence the result. Our calculation is valid in the cube $\mathcal{Q}_{2\rho,T}$.
		\end{proof}
Next, we state and prove a degenerate version result for local gradient estimate (space-time) for the positive solutions to the heat equation in the geodesic cube $\mathcal{Q}_{2\rho,T}$ of bounded Ricci curvature manifold evolving by the degenerate Ricci-type flow.
	\begin{theorem}\label{theorem4.2} 
Let $(M, g)$ be a screen integrable globally null manifold and $(M',g')$ be a leaf of $S(T M)$. Suppose that $(M',g'(t))$ (with $t\in(0, T]$) is a complete solution of the degenerate Ricci-type flow (\ref{eq4.1}) such that the Ricci curvature is bounded in $\mathcal{Q}_{2\rho,T}$, i.e., $-\rho_{1}g'(x,t)\leq Ric'(x,t)\leq\rho_{2}g'(x,t)$ to some positive constants $\rho_{1}$ and $\rho_{2}$, with $(x,t)\in\mathcal{Q}_{2\rho,T}\subset(M'\times(0,T])$. If a smooth positive function $u\in C^{2,1}(M'\times(0,T])$ solves the heat equation (\ref{eq4.2}) in the geodesic cube $\mathcal{Q}_{2\rho,T}$, then for any given $\alpha>1$ with $\frac{1}{p}+\frac{1}{q}=\frac{1}{\alpha}$ and all $(x,t)\in\mathcal{Q}_{2\rho,T}\times(0,T]$, the following estimate holds
		\begin{align}\label{eq4.7}
		\underset{{x\in\mathcal{Q}_{2\rho,T}}}{\sup}\left\{ \left\|\overline{\nabla}f\right\|^{2}-\alpha \partial_{t}f\right\}&\leq \frac{\alpha n p}{4t}+c\alpha^{2}\left(\frac{\alpha^{2}p}{\rho^{2}(\alpha-1)}+\frac{1}{t}+(\rho_{1}+\rho_{2})\right)\nonumber\\&+\frac{\alpha^{2}np}{2(\alpha-1)}\rho_{1}+\frac{\alpha n}{2}(\rho_{1}+\rho_{2})\sqrt{pq}, 
		\end{align}
		where $c$ is an arbitrary constant depending only on the dimension of the leaf.
	\end{theorem}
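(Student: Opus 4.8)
The plan is to run the standard Li--Yau--Hamilton maximum-principle argument, now powered by the differential inequality established in Lemma~\ref{lemma4.1}, localized to the geodesic cube $\mathcal{Q}_{2\rho,T}$ by means of the cut-off function $\varphi$ already introduced in Section~\ref{ricci-type flow}. Concretely, set $f=\ln u$ and $G=t(\|\overline{\nabla}f\|^{2}-\alpha\partial_{t}f)$ as in the lemma, and consider the auxiliary function $\varphi G$ on $\mathcal{Q}_{2\rho,T}\times(0,T]$. Let $(x_{0},t_{0})$ be a point where $\varphi G$ attains its maximum over the (closed) parabolic cube; we may assume $(\varphi G)(x_{0},t_{0})>0$, for otherwise $G\le 0$ on the interior sub-cube and the estimate \eqref{eq4.7} holds trivially. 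Invoking Calabi's trick to handle the possible non-smoothness of $d(\cdot,x_{0},t)$ at the cut locus, we may treat $\varphi$ as $C^{2}$ near $(x_{0},t_{0})$ and apply the first/second derivative tests there: $\overline{\nabla}(\varphi G)=0$ and $(\underline{\Delta}-\partial_{t})(\varphi G)\le 0$ at $(x_{0},t_{0})$.

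The key computational step is to expand $(\underline{\Delta}-\partial_{t})(\varphi G)=\varphi(\underline{\Delta}-\partial_{t})G+2g'(\overline{\nabla}\varphi,\overline{\nabla}G)+G(\underline{\Delta}-\partial_{t})\varphi$, substitute the lower bound for $(\underline{\Delta}-\partial_{t})G$ from Lemma~\ref{lemma4.1}, and at $(x_{0},t_{0})$ replace $\varphi\,\overline{\nabla}G$ by $-G\,\overline{\nabla}\varphi$. The cut-off bounds $\|\overline{\nabla}\varphi\|^{2}/\varphi\le c_{2}/\rho^{2}$, $\partial_{t}\varphi\le c_{2}\rho_{1}$ and the Laplacian comparison estimate $-\underline{\Delta}\varphi\le \rho^{-2}(c_{1}\rho\sqrt{\rho_{2}}\coth(\sqrt{\rho_{2}}\rho)+c_{2})\le \rho^{-2}(c_{1}\rho\sqrt{\rho_{2}}+c_{2})$ feed in exactly the terms appearing on the right of \eqref{eq4.7}. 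The crucial quadratic gain is the term $\tfrac{2\alpha}{np}t(\|\overline{\nabla}f\|^{2}-\partial_{t}f)^{2}$: one writes $\|\overline{\nabla}f\|^{2}-\partial_{t}f=\alpha^{-1}\big((\|\overline{\nabla}f\|^{2}-\alpha\partial_{t}f)+(\alpha-1)\|\overline{\nabla}f\|^{2}\big)=\alpha^{-1}\big(t^{-1}G+(\alpha-1)\|\overline{\nabla}f\|^{2}\big)$, so that after multiplying through by $\varphi$ this term dominates, modulo lower-order pieces, a multiple of $(\varphi G)^{2}/t$. All the remaining terms --- the gradient-of-$\varphi$ cross terms, the $\rho_{1}\|\overline{\nabla}f\|^{2}$ term, the $(\rho_{1}+\rho_{2})^{2}$ term --- are then absorbed into this square by repeated use of Young's inequality $ab\le \varepsilon a^{2}+\tfrac{1}{4\varepsilon}b^{2}$ with carefully chosen weights tracking the powers of $\alpha$, $\alpha-1$, $p$ and $q$.

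Collecting terms, one arrives at a scalar inequality of the shape $a(\varphi G)^{2}\le b(\varphi G)+c$ at $(x_{0},t_{0})$, with $a\sim \tfrac{1}{\alpha n p\, t}$ and $b,c$ explicit in $\rho_{1},\rho_{2},\rho,t,\alpha,p,q$; solving the quadratic and using $\varphi\equiv 1$ on the interior sub-cube $\mathcal{Q}_{\rho,T}$ gives the stated bound for $\sup_{x}\{\|\overline{\nabla}f\|^{2}-\alpha\partial_{t}f\}$ there (and hence, by letting $\rho\to\infty$ when $M'$ is complete with globally bounded Ricci curvature, a global estimate). I expect the main obstacle to be purely bookkeeping: balancing the Young-inequality splittings so that every error term carries the right negative power of $\alpha-1$ and the right factor of $p$ or $\sqrt{pq}$ to match \eqref{eq4.7} exactly --- in particular isolating the clean leading term $\tfrac{\alpha n p}{4t}$ requires that the $G/t$ piece of the square be used with coefficient exactly $1$ and nothing stolen from it by the other estimates. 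The geometric inputs (Bochner identity in the degenerate setting, Laplacian comparison on the leaf $M'$, evolution of $g'^{ij}$ under the flow) are all already in place from Lemma~\ref{lemma4.1} and Section~\ref{ricci-type flow}, so no new conceptual difficulty arises beyond this calculation.
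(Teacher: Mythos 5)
Your proposal follows essentially the same route as the paper's proof: localize with the cut-off $\varphi$, apply the first/second derivative tests and Calabi's trick at a maximum of $\varphi G$, substitute the differential inequality from Lemma~\ref{lemma4.1}, replace $\varphi\,\overline{\nabla}G$ by $-G\,\overline{\nabla}\varphi$, exploit the exact decomposition $\|\overline{\nabla}f\|^{2}-\partial_{t}f=\alpha^{-1}\bigl(t^{-1}G+(\alpha-1)\|\overline{\nabla}f\|^{2}\bigr)$ (the paper's identity (\ref{eq4.11}) in the Li--Yau variables $y,z$), absorb the error terms via $ax^{2}-bx\ge -b^{2}/(4a)$, and solve the resulting quadratic in $\varphi G$. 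This matches the paper's argument step for step, so the plan is sound and only the bookkeeping you already anticipate remains.
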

		\begin{proof}
		Let $f=\ln u$ and $G=t(\left\|\overline{\nabla}f\right\|^{2}-\alpha\partial_{t} f)$. Then, $\frac{1}{t}G=\left(\frac{\left\|\overline{\nabla}u\right\|^{2}}{u^{2}}-\alpha\frac{u_{t}}{u}\right)$. The approach is also by using cut-off function and estimating $(\underline{\Delta}-\partial_{t})(t\varphi G)$ at the point where the maximum value for $(\varphi G)$ is attained as we did in Theorem \ref{Theorem10}. The argument follows
			\begin{align}\label{eq4.8}
			(\underline{\Delta}-\partial_{t})(t\varphi G)=2t\overline{\nabla}\varphi\overline{\nabla}G-\varphi G+t\varphi(\underline{\Delta}-\partial_{t})G+tG(\underline{\Delta}-\partial_{t})\varphi.
			\end{align}
			Suppose $(\varphi G)$ attains its maximum value at $(x_{0},t_{0})\in M'\times(0,T]$ for $t_{0}>0$. Since  $(\varphi G)(x,0)=0$ for all $x\in M$, we have by derivative test that 
			\begin{align}\label{eq4.9}
			\overline{\nabla}(\varphi G)(x_{0},t_{0})=0,\;\;\;\frac{\partial}{\partial t}(\varphi G)(x_{0},t_{0})\geq 0,\;\;\; \underline{\Delta}(\varphi G)(x_{0},t_{0})\leq 0,
			\end{align}
			where the function $(\varphi G)$ is being considered with support on $\mathcal{Q}_{2\rho,T}\times(0,T]$ and we have assumed that $(\varphi G)(x_{0},t_{0})>0$, for $t_{0}>0$. By (\ref{eq4.9}) we notice that
			$ 
			(\underline{\Delta}-\partial_{t})(\varphi G)\leq 0.
			$ 
			Using (\ref{eq4.8}), (\ref{eq4.9}) and Lemma \ref{lemma4.1}, we have 
			\begin{align*}
			0&\geq (\underline{\Delta}-\partial_{t})(t\varphi G)\geq 2t\overline{\nabla}\varphi\overline{\nabla}G-\varphi+t\varphi\Big(-2g'(\overline{\nabla}f,\overline{\nabla}G\Big)-(\left\|\overline{\nabla}f\right\|^{2}-\alpha\partial_{t}f)\nonumber\\
			&-2\alpha t\rho_{1}\left\|\overline{\nabla}f\right\|^{2}+\frac{2\alpha}{np}t\Big(\left\|\overline{\nabla}f\right\|^{2}-\partial_{t}f)^{2}-\frac{\alpha n q}{2}t\widehat{\rho}\Big)+tG(\underline{\Delta}-\partial_{t})\varphi,
			\end{align*}
			where $\widehat{\rho}=(\rho_{1}+\rho_{2})$. Noticing also that $\varphi\overline{\nabla}G$ can be replaced by $-G\overline{\nabla}\varphi$, by the condition $\overline{\nabla}(\varphi G)=0$. Therefore, we have
			\begin{align*}
			0&\leq 2t\overline{\nabla}\varphi\overline{\nabla}G-\varphi G+2t\overline{\nabla}f\overline{\nabla}\varphi.G\\&+t\varphi\left\{\frac{2\alpha}{np}t(\left\|(\overline{\nabla}f\right\|^{2}-\partial_{t}f)^{2}-2\alpha t\rho_{1}\left\|\overline{\nabla}f\right\|^{2}-\frac{\alpha n q}{2}t\widehat{\rho}\right\}\\&-t\varphi(\left\|(\overline{\nabla}f\right\|^{2}-\alpha\partial_{t}f)+tG(\underline{\Delta}-\partial_{t})\varphi,
			\end{align*}
			from which we get 
			\begin{align*}
			0\leq &-2t\frac{\left\|\overline{\nabla}\varphi\right\|^{2}}{\varphi}G-2\varphi G -2t\left\|\overline{\nabla}f\right\|^{2}\left\|\overline{\nabla}\varphi\right\|^{2}G\\&+t\varphi\left\{\frac{2\alpha}{np}t(\left\|(\overline{\nabla}f\right\|^{2}-\partial_{t}f)^{2}-2\alpha t\rho_{1}\left\|\overline{\nabla}f\right\|^{2}-\frac{\alpha n q}{2}t\widehat{\rho}\right\}\\&+tG(\underline{\Delta}-\partial_{t})\varphi.
			\end{align*}
			As we have noted earlier, Calabi's trick and Laplacian Comparison Theorem allows us to do the following calculation on the cut-off function depending on the geodesic distance, since we know that cut locus does not intersect with the geodesic cube
			$
			-\frac{\left\|\overline{\nabla}\varphi\right\|^{2}}{\varphi}\geq -\frac{c_{2}}{\rho^{2}},\;\;\; \underline{\Delta}\varphi\geq-\frac{c_{1}}{\rho}\sqrt{\rho_{1}}-\frac{c_{2}}{\rho^{2}}\;\; \text{and}\;\; -\frac{\partial\varphi}{\partial t}\geq-c_{2}\widehat{\rho}-\frac{c_{1}}{\tau},\;\;\; \tau\in(0,T].
			$
			Hence,
			$
			tG(\underline{\Delta}-\partial_{t})\varphi\geq t c_{3}\left(-\frac{1}{\rho}\sqrt{\rho_{1}}-\frac{1}{\rho^{2}}-\frac{1}{\tau}-\widehat{\rho}\right),
			$
			where we have taken $c_{3}$ to be maximum of $c_{1}, c_{2}$, so our computation becomes
			\begin{align*}
			0\geq &-2t\frac{c_{2}}{\rho^{2}}G-2\varphi G -2t\frac{\sqrt{c_{2}}}{\rho}\left\|\overline{\nabla}f\right\|\varphi^{\frac{1}{2}}G\\&+t\varphi\left\{\frac{2\alpha}{np}t(\left\|(\overline{\nabla}f\right\|^{2}-\partial_{t}f)^{2}-2\alpha t\rho_{1}\left\|\overline{\nabla}f\right\|^{2}-\frac{\alpha n q}{2}t\widehat{\rho}\right\}\\&+t c_{3}\left(-\frac{1}{\rho}\sqrt{\rho_{1}}-\frac{1}{\rho^{2}}-\frac{1}{\tau}-\widehat{\rho}\right)G.
			\end{align*}
			Multiplying through by $\varphi$ such that $0\leq\varphi\leq 1$ again, we have 
			\begin{align}\label{eq4.10}
			0\geq &-2\varphi^{2}G-2t\frac{\sqrt{c_{2}}}{\rho}\left\|\overline{\nabla}f\right\|\varphi^{\frac{3}{2}}G\nonumber\\&+\frac{2t^{2}}{n}\left\{\frac{\alpha}{p}(\varphi\left\|(\overline{\nabla}f\right\|^{2}-\varphi\partial_{t}f)^{2}-\alpha n\rho_{1}\varphi^{2}\left\|\overline{\nabla}f\right\|^{2}-\frac{\alpha n^{2} q}{4}t\widehat{\rho}\varphi^{2}\right\}\nonumber\\&+t c_{3}\left(-\frac{1}{\rho}\sqrt{\rho_{1}}-\frac{1}{\rho^{2}}-\frac{1}{\tau}-\widehat{\rho}\right)(\varphi G).
			\end{align}
			Using a standard argument from Lie and Yau \cite{PS} (see also Schoen and Yau \cite{RS} and references therein), we let   
			$
			y=\varphi\left\|\overline{\nabla}f\right\|^{2}$ and $Z=\varphi \partial_{t}f$ to have $\varphi^{2}\left\|\overline{\nabla}f\right\|^{2}=\varphi y\leq y,
			$
			and
			$
			y^{\frac{1}{2}}(y-\alpha z)=\varphi^{\frac{1}{2}}\left\|\overline{\nabla}f\right\|(\varphi\left\|\overline{\nabla}f\right\|^{2}-\varphi\partial_{t}f)=\frac{1}{t}\left\|\overline{\nabla}f\right\|\varphi^{\frac{3}{2}}G.
			$
			Noting that
			\begin{align}\label{eq4.11}
			(y-z)^{2}&=\frac{1}{\alpha^{2}}(y-\alpha z)^{2}+\frac{\alpha-1}{\alpha^{2}}y^{2}+\frac{2(\alpha-1)}{\alpha^{2}}y(y-\alpha z),
			\end{align}
			and 
			\begin{align}
			(y-z)^{2}&-np\rho_{1}y-np\frac{\sqrt{c_{2}}}{\rho}y^{\frac{1}{2}}(y-\alpha z)\nonumber\\&=\frac{1}{\alpha^{2}}(y-\alpha z)^{2}+\left(\frac{\alpha-1}{\alpha^{2}}y^{2}-np\rho_{1}y\right)\nonumber\\&+\left(\frac{2(\alpha-1)}{\alpha^{2}}y-np\frac{\sqrt{c_{2}}}{\rho}y^{\frac{1}{2}}\right)(y-\alpha z),
			\end{align}
			also using the inequality of the form $ax^{2}-bx\geq -\frac{b^{2}}{4a}$, $(a,b>0)$, we have 
			\begin{align}
			\frac{\alpha-1}{\alpha^{2}}y^{2}-np\rho_{1}y&\geq -\frac{\alpha^{2}n^{2}p^{2}\rho_{1}^{2}}{4(\alpha-1)^{2}},\\
			\mbox{and}\;\;\;\frac{2(\alpha-1)}{\alpha^{2}}y-np\frac{\sqrt{c_{2}}}{\rho}&y^{\frac{1}{2}}\geq -\frac{c_{2}\alpha^{2}n^{2}p^{2}}{8(\alpha-1)\rho^{2}}.\label{eq4.13}
			\end{align}
			Putting together (\ref{eq4.11})-(\ref{eq4.13}), the second and the third terms in the right hand side of (\ref{eq4.10}) are further calculated as follows, using $t(y-\alpha z)=\varphi G$,
			\begin{align*}
			&\frac{2t^{2}}{n}\left\{\frac{\alpha}{p}(y-z)^{2}-\alpha n\rho_{1}y-\frac{\alpha n^{2}q}{4}\widehat{\rho}\varphi^{2}-\frac{n\sqrt{c_{2}}}{\rho}y^{\frac{1}{2}}(y-\alpha z)\right\}\\ &\geq \frac{2t^{2}}{n}\left\{\left(\frac{1}{\alpha p}(y-\alpha z)^{2}-\frac{\alpha^{3}n^{2}p\rho^{2}_{1}}{4(\alpha-1)^{2}}\right)-\left(\frac{c_{2}\alpha^{3}n^{2}p}{8\rho^{2}(\alpha-1)}(y-\alpha z)+\frac{\alpha n^{2}q}{4}\widehat{\rho}\varphi^{2}\right)\right\}.
			\end{align*}
			Hence, by (\ref{eq4.10})
			\begin{align*}
			&\left\{\frac{2t^{2}}{n}\left(\frac{1}{\alpha p}(y-\alpha z)^{2}-\frac{\alpha^{3}n^{2}p\rho^{2}_{1}}{4(\alpha-1)^{2}}\right)-\frac{2t}{n}\left(\frac{c_{2}\alpha^{3}n^{2}p}{8\rho^{2}(\alpha-1)}(\varphi G)\right)-\frac{\alpha nq}{2}t^{2}\widehat{\rho}\varphi^{2}\right\}\\&+t c_{3}\left(-\frac{1}{\rho}\sqrt{\rho_{1}}-\frac{1}{\rho^{2}}-\frac{1}{\tau}-\widehat{\rho}\right)(\varphi G) -2\varphi^{2}G\le 0,
			\end{align*}
			since $t(y-\alpha z)=\varphi G$, we have 
			\begin{align*}
			0\geq \frac{2}{\alpha np}(\varphi G)^{2}&+\left[\frac{c_{4}}{\rho^{2}}t\left(-\frac{p\alpha^{3}}{(\alpha-1)}-\rho\sqrt{\rho_{1}}-1-\frac{\rho^{2}}{\tau}-\rho^{2}\widehat{\rho}\right)-1\right](\varphi G)\\&-\left(\frac{\alpha^{3}np\rho_{1}^{2}}{2(\alpha-1)^{2}}t^{2}+\frac{\alpha nq}{2}t^{2}\widehat{\rho}\varphi^{2}\right),
			\end{align*}
			where $c_{4}$ is a constant depending on $n$. We see that the left hand side of the last inequality is a quadratic polynomial in $(\varphi G)$, then using the quadratic formula and elementary inequality of the form $\sqrt{1+a^{2}+p^{2}}\leq 1+a+p$ yields
			\begin{align*}
			\varphi G &\leq \frac{\alpha np}{4}+\frac{\alpha np}{4\rho^{2}}c_{4}t\left(\frac{p\alpha^{3}}{(\alpha-1)}+\rho\sqrt{\rho_{1}}+\frac{\rho^{2}}{\tau}+\rho^{2}\widehat{\rho}\right)\\&+\frac{\alpha np}{4}\left(\frac{2\alpha\rho_{1}}{\alpha-1}t+2\widehat{\rho}^{\frac{1}{2}}\varphi t \sqrt{\frac{q}{p}}\right) =\frac{\alpha np}{4}+\frac{\alpha np}{4\rho^{2}}c_{4}t\left(\frac{p\alpha^{3}}{(\alpha-1)}+\frac{\rho^{2}}{\tau}+\rho^{2}\widehat{\rho}\right)\\
			&+\frac{\alpha^{2}np\rho_{1}}{2(\alpha-1)}t+\frac{\alpha n}{2}t\widehat{\rho}^{\frac{1}{2}}\sqrt{pq}.
			\end{align*}
			Recall that we picked up $\varphi(x,t)$ such that $0\leq \varphi\leq 1$ and particularly $\varphi(x,t)=1$ in $\mathcal{Q}_{2\rho,T}$ and sine $(x_{0},t_{0})$ is a maximum point for $(\varphi G)$ in $\mathcal{Q}_{2\rho,T}$, we have $G(x,\tau)=(\varphi G)(x,\tau)\leq(\varphi G)(x_{0},t_{0})$. Hence
			\begin{align}\nonumber
			\frac{1}{t}G(x,t)\leq\frac{\alpha np}{4t}+\frac{\alpha n}{4\rho^{2}}c_{4}\left(\frac{\alpha p}{(\alpha-1)}+\frac{\rho^{2}}{\tau}+\rho^{2}\widehat{\rho}\right)+\frac{\alpha^{2}np}{2(\alpha-1)}\rho_{1}+\frac{\alpha n}{2}\widehat{\rho}^{\frac{1}{2}}\sqrt{pq},
			\end{align}
			for all $x\in M$ such that $d(x,x_{0},\tau)<\rho$ and $\tau\in(0,T]$ was arbitrarily chosen, which implies the estimate (\ref{eq4.7}) and completes the proof.  
		\end{proof}
	Let $(M, g)$ be a screen integrable globally null manifold. Let $(M',g')$ be an $n$-dimensional compact (or noncompact without boundary) leaf of  $S(TM)$ with bounded Ricci curvature. Using previous Lemma \ref{lemma4.1} and local gradient in Theorem \ref{theorem4.2}, we now present global estimates for the positive solutions to the heat equation when the metric $g'$ evolves by the degenerate Ricci-type flow.
	\begin{theorem}
		Let $(M, g)$ be a screen integrable globally null manifold and $(M',g'(t))$ be a complete leaf of $S(T M)$ and $g'(t)$ solves the degenerate Ricci-type flow equation such that its Ricci curvature is bounded for all $(x,t)\in M'\times(0,T]$. Let $u=u(x,t)>0$ be any positive solution to the heat equation (\ref{eq4.2}). Then, we have for $-\rho_{1}g'(x,t)\leq Ric'(x,t)\leq\rho_{2}g'(x,t)$ and $\alpha>1$ with $\frac{1}{p}+\frac{1}{q}=\frac{1}{\alpha}$,  
		\begin{align}
		\frac{\left\|\overline{\nabla}u\right\|^{2}}{u^{2}}-\alpha\frac{u_{t}}{u}\leq\frac{\alpha np}{4t}+\frac{\alpha^{2} np}{2(\alpha-1)}\rho_{1}+\frac{\alpha n}{2}(\rho_{1}+\rho_{2})\sqrt{pq},
		\end{align}
		for all $(x,t)\in M'\times(0,T]$.
		Moreover, if we assume $M'$ has non-negative Ricci curvature in the case of compact manifold, i.e., for $0\leq Ric'(x,t)\leq\rho g'(x,t)$, we have 
		\begin{align}
		\frac{\left\|\overline{\nabla}u\right\|^{2}}{u^{2}}-\alpha\frac{u_{t}}{u}\leq\frac{\alpha np}{4t}+\frac{\alpha n}{2}\rho\sqrt{pq},
		\end{align} 
		for all $(x,t)\in M'\times(0,T]$ and $\alpha\geq 1$ with $\frac{1}{p}+\frac{1}{q}=\frac{1}{\alpha}$.
	\end{theorem}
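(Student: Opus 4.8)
The plan is to deduce both global estimates from the localized results already established, namely Lemma~\ref{lemma4.1} and Theorem~\ref{theorem4.2}. For a complete non-compact leaf $M'$ I would fix $(x,t)\in M'\times(0,T]$, apply Theorem~\ref{theorem4.2} on a geodesic cube $\mathcal{Q}_{2\rho,T}$ containing it, and then send $\rho\to\infty$: the cut-off contributions $\frac{\alpha^{2}p}{\rho^{2}(\alpha-1)}$ and every other $O(\rho^{-1})$, $O(\rho^{-2})$ term coming from $\overline{\nabla}\varphi$, $\underline{\Delta}\varphi$ and $\partial_t\varphi$ tend to zero, and what remains is exactly $\frac{\alpha np}{4t}+\frac{\alpha^{2}np}{2(\alpha-1)}\rho_{1}+\frac{\alpha n}{2}(\rho_{1}+\rho_{2})\sqrt{pq}$, which is the first asserted bound. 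For the compact case (more precisely, compact or non-compact without boundary, so that a maximum is attained) it is simpler to run the maximum principle directly on $G=t(\|\overline{\nabla}f\|^{2}-\alpha\partial_t f)$, using the inequality of Lemma~\ref{lemma4.1}, which already holds for $G$ with no cut-off.

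Carrying this out: with $f=\ln u$ and $\widehat{\rho}=\rho_{1}+\rho_{2}$, Lemma~\ref{lemma4.1} gives $(\underline{\Delta}-\partial_t)G\ge -2g'(\overline{\nabla}f,\overline{\nabla}G)-(\|\overline{\nabla}f\|^{2}-\alpha\partial_t f)-2\alpha t\rho_{1}\|\overline{\nabla}f\|^{2}+\frac{2\alpha}{np}t(\|\overline{\nabla}f\|^{2}-\partial_t f)^{2}-\frac{\alpha nq}{2}t\widehat{\rho}^{\,2}$. Let $(x_0,t_0)$ be a maximum point of $G$ on $M'\times[0,T]$; if $t_0=0$ then $G\le 0$ and there is nothing to prove, so assume $t_0>0$, where $\overline{\nabla}G=0$, $\underline{\Delta}G\le 0$, $\partial_t G\ge 0$, hence $(\underline{\Delta}-\partial_t)G\le 0$ and the gradient term drops. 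Setting $y=\|\overline{\nabla}f\|^{2}$, $z=\partial_t f$, so that $t(y-\alpha z)=G$ at $(x_0,t_0)$, and invoking the identity $(y-z)^{2}=\frac1{\alpha^{2}}(y-\alpha z)^{2}+\frac{\alpha-1}{\alpha^{2}}y^{2}+\frac{2(\alpha-1)}{\alpha^{2}}y(y-\alpha z)$ together with the elementary bound $ax^{2}-bx\ge -\frac{b^{2}}{4a}$ applied to $\frac{\alpha-1}{\alpha^{2}}y^{2}-np\rho_{1}y$, the inequality turns into a quadratic inequality in $G$ of the shape $\frac{2}{\alpha np}G^{2}+(\text{linear in }G)-(\text{const})\le 0$. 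Solving it by the quadratic formula and controlling the root via $\sqrt{1+a^{2}}\le 1+a$ yields $G(x_0,t_0)\le \frac{\alpha np}{4}+\frac{\alpha^{2}np}{2(\alpha-1)}\rho_{1}t_0+\frac{\alpha n}{2}\widehat{\rho}\sqrt{pq}\,t_0$; since $G(x,t)\le G(x_0,t_0)$ everywhere, dividing by $t$ gives the first inequality.

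The final (non-negative Ricci) statement is then the specialization $\rho_{1}=0$, $\widehat{\rho}=\rho$: the auxiliary estimate $ax^{2}-bx\ge -b^{2}/(4a)$ is no longer needed, so $\alpha$ may be taken $\ge 1$ rather than $>1$, and the term $\frac{\alpha^{2}np}{2(\alpha-1)}\rho_{1}$ disappears, leaving $\frac{\alpha np}{4t}+\frac{\alpha n}{2}\rho\sqrt{pq}$. I expect the main obstacle to be purely organizational: keeping the coefficients of the quadratic in $G$ clean so that the constant produced in the compact-case argument coincides with the one obtained by letting $\rho\to\infty$ in Theorem~\ref{theorem4.2}. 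The substantive analytic ingredients — the Bochner--Weitzenb\"{o}ck identity, Calabi's trick, the Laplacian comparison theorem, and the uniform equivalence of the flow metrics over $(0,T]$ under a Ricci bound (recorded earlier) — are already in place; the last of these is what justifies sending $\rho\to\infty$ uniformly in $t\in(0,T]$.
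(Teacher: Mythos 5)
Your compact-case argument is, in substance, the paper's proof: the paper introduces the shifted function $\widetilde{G}=G-\frac{\alpha^{2}np}{2(\alpha-1)}t\rho_{1}-\frac{\alpha n}{2}t(\rho_{1}+\rho_{2})\sqrt{pq}$ and phrases things as a contradiction ("suppose $\widetilde{G}>\alpha np$"), but what it actually does is exactly what you describe --- evaluate the inequality of Lemma \ref{lemma4.1} at an interior maximum of $G$, drop the gradient term, use the identity $(y-z)^{2}=\frac{1}{\alpha^{2}}(y-\alpha z)^{2}+\frac{\alpha-1}{\alpha^{2}}y^{2}+\frac{2(\alpha-1)}{\alpha^{2}}y(y-\alpha z)$ together with $ay^{2}-by\ge -b^{2}/(4a)$, and solve the resulting quadratic in $G/t_{0}$. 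Your observation that the $\rho_{1}=0$ specialization removes the only step requiring $\alpha>1$, so that $\alpha\ge 1$ is admissible in the second inequality, also matches the paper's subsequent $\alpha=1$ theorem.

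The one concrete problem is your non-compact branch. It is not true that every cut-off contribution in Theorem \ref{theorem4.2} is $O(\rho^{-1})$ or $O(\rho^{-2})$: the bound used there for the time derivative of the cut-off is $-\partial_{t}\varphi\ge -c_{2}(\rho_{1}+\rho_{2})-c_{1}/\tau$, which does not decay in $\rho$, and after the quadratic formula this is precisely why the stated local estimate carries the residual term $c\alpha^{2}\bigl(\frac{\alpha^{2}p}{\rho^{2}(\alpha-1)}+\frac{1}{t}+(\rho_{1}+\rho_{2})\bigr)$. Letting $\rho\to\infty$ in that estimate therefore yields only $\frac{\alpha np}{4t}+c\alpha^{2}\bigl(\frac{1}{t}+(\rho_{1}+\rho_{2})\bigr)+\frac{\alpha^{2}np}{2(\alpha-1)}\rho_{1}+\frac{\alpha n}{2}(\rho_{1}+\rho_{2})\sqrt{pq}$ --- which is the weaker bound recorded in the paper's final corollary, not the clean constants claimed in this theorem. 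The paper commits the same hand-wave ("after sending $\rho$ to $\infty$"), but the mechanism that actually produces the stated constants is the direct maximum-principle computation with $\varphi\equiv 1$ (i.e.\ your compact-case argument), which presupposes that $G$ attains its maximum on $M'\times(0,T]$; in the complete non-compact setting this attainment needs to be justified (or the cut-off argument needs a sharper bound on $\partial_{t}d$ so that the $\partial_{t}\varphi$ contribution genuinely decays), and neither you nor the paper supplies that justification.
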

    \begin{proof}
    Set $f=\ln u$ and allow $G$ to remain as before, calculating at the maximum point $(x_{0},t_{0})\in M'\times(0,T]$, we will show that a new function
    \begin{align}\label{eq4.18}
    \widetilde{G}=G-\frac{\alpha^{2}np}{2(\alpha-1)}t\rho_{1}-\frac{\alpha n}{2}t(\rho_{1}+\rho_{2})\sqrt{pq},
    \end{align}
    satisfies the inequality
    \begin{align}\label{eq4.19}
    \widetilde{G}\leq\frac{\alpha np}{4}+\frac{\alpha^{2}np}{2(\alpha-1)}t\rho_{1}+\frac{\alpha n}{2}t(\rho_{1}+\rho_{2})\sqrt{pq}.
    \end{align}
    It suffices to prove that $\widetilde{G}\leq\alpha np$, for any $(x_{0},t_{0})\in M'\times(0,T]$. We now show this by contradiction. Suppose $\widetilde{G}>\alpha np$ and $\widetilde{G}$ has its maximum at the point $(x_{0},t_{0})$, then we know that 
			$
			\overline{\nabla}\widetilde{G}(x_{0},t_{0})=0,
			$
			$
			\underline{\Delta}\widetilde{G}(x_{0},t_{0})\leq 0,
			$
			$
			\frac{\partial}{\partial t}\widetilde{G}(x_{0},t_{0})\geq 0,
			$
			and
			$			
			(\underline{\Delta}-\partial_{t})\widetilde{G}(x_{0},t_{0})\leq 0,
			$
			then, by Lemma \ref{lemma4.1}, we have $0\geq (\underline{\Delta}-\partial_{t})\widetilde{G}\geq (\underline{\Delta}-\partial_{t})G$. Noticing that 
			\begin{align*}
			(\left\|\overline{\nabla}f\right\|^{2}-\partial_{t}f)^{2}&=\frac{1}{\alpha^{2}}\left(\frac{G}{t_{0}}\right)^{2}+\frac{2(\alpha-1)}{\alpha^{2}}\left\|\overline{\nabla}f\right\|^{2}\left(\frac{G}{t_{0}}\right)+\frac{(\alpha-1)^{2}}{\alpha}\left\|\overline{\nabla}f\right\|^{4}.
			\end{align*}
			By Lemma \ref{m10}
			\begin{align*}
			0\geq (\underline{\Delta}-\partial_{t})G\geq &-2\overline{\nabla}f\overline{\nabla}G+\frac{2\alpha}{np}t_{0}(\left\|\overline{\nabla}f\right\|^{2}-\partial_{t}f)^{2}-(\left\|\overline{\nabla}f\right\|^{2}-\alpha\partial_{t}f)\\&-2\alpha t_{0}\rho_{1}\left\|\overline{\nabla}f\right\|^{2}-\frac{\alpha nq}{2}t_{0}(\rho_{1}+\rho_{2})^{2}.
			\end{align*}
			Following the calculation in Theorem \ref{theorem4.2}, we obtain
			$$
			\frac{G}{t_{0}}\leq \frac{\alpha np}{4t_{0}}+\frac{\alpha^{2} np}{2(\alpha-1)}\rho_{1}+\frac{\alpha n}{2}(\rho_{1}+\rho_{2})^{2}\sqrt{pq},
			$$
			after sending $\rho\;\; \text{to}\;\;\infty$. Consequently, we have the following inequality
			$$
			0\geq\frac{2t_{0}}{\alpha np}\left(\frac{G}{t_{0}}\right)^{2}-\frac{G}{t_{0}}-\left(\frac{\alpha^{2}np}{2(\alpha-1)}\rho_{1}+\frac{\alpha n}{2}(\rho_{1}+\rho_{2})^{2}\sqrt{pq}\right),
			$$
			resulting into a quadratic inequality, and since from (\ref{eq4.18})
			$$
			\frac{G}{t_{0}}=\frac{\widetilde{G}}{t_{0}}+\frac{\alpha^{2}np}{2(\alpha-1)}\rho_{1}+\frac{\alpha n}{2}(\rho_{1}+\rho_{2})^{2}\sqrt{pq},
			$$
			\begin{align}
			\frac{2t_{0}}{\alpha np}\left(\frac{G}{t_{0}}\right)^{2}-\frac{G}{t_{0}}-\frac{\alpha^{2}np}{2(\alpha-1)}\rho_{1}\leq 0.
			\end{align}
			Using the quadratic formula, we have 
			$$
			\frac{G}{t_{0}}\leq \frac{\alpha np}{4t_{0}}\left\{ 1+\sqrt{1+\frac{4\alpha np}{(\alpha-1)}}\rho_{1}\right\},
			$$
			which obviously implies that $\widetilde{G}\leq \alpha np$, a contradiction by the assumption (\ref{eq4.19}). By definition of $\widetilde{G}$, we therefore have 
			$$
			\frac{G}{t_{0}}=\frac{\left\|\overline{\nabla}u\right\|^{2}}{u^{2}}-\frac{u_{t}}{u}\leq\frac{\widetilde{G}}{t_{0}}+\frac{\alpha^{2} np}{2(\alpha-1)}\rho_{1}+\frac{\alpha n}{2}(\rho_{1}+\rho_{2})\sqrt{pq}.
			$$
			The desired estimate follows since $t_{0}$ was arbitrarily chosen.
		\end{proof}
		The case $\alpha=1$ leads to the following result, in which we choose $p=q=2$.
\begin{theorem}
	Let $(M, g)$ be a compact screen integrable globally null manifold and $(M',g'(t))$ (with $t\in(0, T]$) be a complete leaf of $S(T M)$ and $g'(t)$ solves the degenerate Ricci-type flow equation (\ref{eq4.2}). Assume that $0\leq Ric'(x,t)\leq\rho g'(x,t)$ for positive constant $\rho>0$	and for all $(x,t)\in M'\times (0,T]$. Let $u=u(x,t)>0$ be any positive solution to the heat equation (\ref{eq4.2}). Then, the estimate 
	\begin{align}
	\frac{\left\|\overline{\nabla}u\right\|^{2}}{u^{2}}-\frac{u_{t}}{u}\leq\frac{n}{2t}+n\rho,
	\end{align}
	holds, for all $(x,t)\in M'\times(0,T]$.
	\begin{proof}
		As before, set $f=\ln u$ and $G=t\left(\left\|\overline{\nabla}f\right\|^{2}-\partial_{t} f\right)$.
		Fix $\tau\in(0,T]$ and we can choose a point $(x_{0}, t_{0})\in M'\times(0,\tau]$, where $G_{1}$ attains its maximum on $M'\times(0,\tau]$. Now we can show that 
		\begin{align}\label{equ5.20}
		G(x_{0},t_{0})\leq t_{0}n\rho+\frac{n}{2}.
		\end{align}
		If $t_{0}=0$, then $G(x_{0},t_{0})=0$, for every $x\in M$. From this, we can get estimate (\ref{equ5.20}).
		Next, we can assume $t_{0}>0$, from Lemma \ref{lemma4.1} and the conditions on the Ricci curvature of $M$ imply the inequality
		\begin{align*}
		(\underline{\Delta}-\partial_{t})G\geq -2\overline{g}(\overline{\nabla}f,\overline{\nabla}G)+\frac{2p}{n}\frac{G^{2}}{t_{0}}-\frac{G}{t_{0}}-\frac{t_{0}n}{2(1-p)}\rho^{2}.
		\end{align*}
		Recall that $G$ attains its maximum at $(x_{0},t_{0})$ \cite{MXA}. This gives us 
		 $$
		\underline{\Delta}G(x_{0},t_{0})\leq 0,\;\;\;
		\frac{\partial}{\partial t}G(x_{0},t_{0})\geq 0,\;\;\;
		\overline{\nabla}G(x_{0},t_{0})=0. 
		$$
		Then, the estimate 
		$$
		\frac{2p}{n}\frac{G^{2}}{t_{0}}-\frac{G}{t_{0}}-\frac{t_{0}n}{2(1-p)}\rho^{2}\leq 0,
		$$
		holds at $(x_{0},t_{0})$, and the quadratic formula yields
		$$
		G(x_{0},t_{0})\leq  \frac{n}{4p}\left\{ 1+\sqrt{1+\frac{4 pt_{0}^{2}}{(1-p)}}\rho^{2}\right\}.
		$$
		Now recall that $(x_{0},t_{0})$ is a maximum point for $G$ on $M'\times(0,\tau]$ from this fact, we can get that
		$$
		G(x,\tau)\leq G(x_{0},t_{0})\leq t_{0}\rho n +\frac{n}{2}\leq \tau\rho n+\frac{n}{2},
		$$
		for all $x\in M$. Therefore, the estimate
		$$
		\frac{\left\|\overline{\nabla}u\right\|^{2}}{u^{2}}-\frac{u_{t}}{u}\leq\rho n+\frac{n}{2\tau},
		$$
		holds at $(x,\tau)$. Since the number $\tau\in(0,T]$ can be chosen arbitrarily.
	\end{proof}
\end{theorem}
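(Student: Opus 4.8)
The plan is to run the Li--Yau--Hamilton maximum principle argument in its simplest ($\alpha=1$) form, which is available here precisely because $M'$ is closed, so no cut-off function is needed. First I would set $f=\ln u$ and $G=t\big(\|\overline{\nabla}f\|^{2}-\partial_{t}f\big)$, so that $\tfrac1t G=\tfrac{\|\overline{\nabla}u\|^{2}}{u^{2}}-\tfrac{u_{t}}{u}$ and it suffices to bound $G$. Specializing Lemma \ref{lemma4.1} to $\alpha=1$, $p=q=2$, $\rho_{1}=0$ and $\rho_{2}=\rho$: the term $-2\alpha t\rho_{1}\|\overline{\nabla}f\|^{2}$ drops, the term $-\tfrac{\alpha nq}{2}t(\rho_{1}+\rho_{2})^{2}$ becomes a multiple of $t\rho^{2}$, and since $\|\overline{\nabla}f\|^{2}-\partial_{t}f=G/t$ the square term becomes a multiple of $G^{2}/t$. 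Thus one gets a differential inequality of the shape
$$(\underline{\Delta}-\partial_{t})G\ \ge\ -2\,g'(\overline{\nabla}f,\overline{\nabla}G)+\frac{c_{1}}{n}\frac{G^{2}}{t}-\frac{G}{t}-c_{2}\,nt\rho^{2}$$
with universal constants $c_{1},c_{2}$ (in fact $c_{1},c_{2}$ will be pinned down by the choice $p=q=2$).

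Next I would fix $\tau\in(0,T]$ and use compactness of $M'$ together with continuity of $G$ (and $G\equiv 0$ on $M'\times\{0\}$) to choose a point $(x_{0},t_{0})\in M'\times[0,\tau]$ at which $G$ attains its maximum over $M'\times[0,\tau]$. If $t_{0}=0$ then $G(x,\tau)\le G(x_{0},0)=0$ and the estimate holds trivially, so assume $t_{0}>0$. At such a maximum the first and second derivative tests give $\overline{\nabla}G(x_{0},t_{0})=0$, $\underline{\Delta}G(x_{0},t_{0})\le 0$ and $\partial_{t}G(x_{0},t_{0})\ge 0$, hence $(\underline{\Delta}-\partial_{t})G(x_{0},t_{0})\le 0$. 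Feeding this into the inequality above makes the gradient term vanish and yields a quadratic inequality for $G_{0}:=G(x_{0},t_{0})$ of the form $\tfrac{c_{1}}{n}G_{0}^{2}-G_{0}-c_{2}\,nt_{0}^{2}\rho^{2}\le 0$; the quadratic formula together with the elementary bound $\sqrt{a+b}\le\sqrt{a}+\sqrt{b}$ then gives $G_{0}\le \tfrac n2+n\rho t_{0}$.

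Finally, since $(x_{0},t_{0})$ maximizes $G$ on $M'\times[0,\tau]$ and $t_{0}\le\tau$, we obtain $G(x,\tau)\le G_{0}\le \tfrac n2+n\rho\tau$ for every $x\in M'$; dividing by $\tau$ gives $\tfrac{\|\overline{\nabla}u\|^{2}}{u^{2}}-\tfrac{u_{t}}{u}\le \tfrac{n}{2\tau}+n\rho$ at time $\tau$, and since $\tau\in(0,T]$ was arbitrary the theorem follows. I expect the only delicate point to be the constant bookkeeping in the specialization of Lemma \ref{lemma4.1}: one has to track how the Bochner term is split using the Cauchy--Schwarz inequality $(\underline{\Delta}f)^{2}\le n f_{ij}^{2}$ and how the hypothesis $0\le Ric'\le\rho g'$ enters the estimates for $Ric'_{ij}f_{i}f_{j}$ and $\sup_{M'}\|Ric'_{ij}\|^{2}$, so that the resulting quadratic in $G_{0}$ has exactly the coefficients needed to collapse to $\tfrac n2+n\rho t_{0}$ rather than a larger multiple. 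The maximum-principle step itself is routine once compactness of $M'$ removes the need for a cut-off and hence for Calabi's trick.
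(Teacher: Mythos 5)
Your proposal follows essentially the same route as the paper's own proof: the same auxiliary function $G=t\big(\|\overline{\nabla}f\|^{2}-\partial_{t}f\big)$, the same specialization of Lemma \ref{lemma4.1} to $\alpha=1$, $p=q=2$, the same maximum-principle argument on the compact leaf $M'$ (so no cut-off function or Calabi trick is needed), and the same quadratic inequality at the interior maximum resolved via the quadratic formula and $\sqrt{1+a^{2}}\le 1+a$. The one point you flag as delicate --- the constant bookkeeping required to land exactly on $\tfrac{n}{2}+n\rho t_{0}$ --- is indeed the weakest spot, and the paper's own displayed quadratic (which keeps a factor $1-p$ in a denominator even though $p=2$) does not literally yield that constant either, so your caution there is well placed.
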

\begin{corollary}	
Let $(M, g)$ be a screen integrable globally null manifold and $(M',g')$ be a complete leaf of $S(T M)$. Assume that  $g'(t)$ solves the degenerate Ricci-type flow equation (\ref{eq4.1}) such that its Ricci curvature is bounded for all $(x,t)\in M'\times(0,T]$. Let $u=u(x,t)>0$ be any positive solution to the heat equation (\ref{eq4.2}), and  $0<u\le A$. Then, for $-\rho_{1}g'(x,t)\leq Ric'(x,t)\leq\rho_{2}g'(x,t)$, there exists an absolute constant $C$ such that 
$$
t\left\|\overline{\nabla}u\right\|^{2}\le CA(1+\rho_{1}T),
$$
in  $M'\times(0,T]$.
\end{corollary}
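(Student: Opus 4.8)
The plan is to deduce the corollary from a Hamilton--Souplet--Zhang type logarithmic gradient estimate for the heat equation~(\ref{eq4.2}) along the forward degenerate Ricci-type flow~(\ref{eq4.1}), obtained by running the Bochner identity / maximum-principle / cut-off scheme already used for Theorems~\ref{Theorem10} and~\ref{Theorem11}, and then feeding in the bound $0<u\le A$. Concretely, the first goal is the pointwise estimate
$$
\frac{\left\|\overline{\nabla}u\right\|^{2}}{u^{2}}\le\Big(1+\ln\tfrac{A}{u}\Big)^{2}\Big(\tfrac{1}{t}+c\,\rho_{1}\Big)\qquad\text{on }M'\times(0,T],
$$
with $c=c(n)$; multiplying through by $u^{2}$, using $0<u\le A$ together with the elementary fact that $s^{2}\bigl(1+\ln(A/s)\bigr)^{2}$ is bounded on $(0,A]$, and absorbing constants then gives $t\left\|\overline{\nabla}u\right\|^{2}\le CA(1+\rho_{1}T)$ on $M'\times(0,T]$.

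For the displayed estimate I would set $f=\ln(u/A)$, so that $f\le 0$ and $1-f\ge 1$, and put $\phi=\left\|\overline{\nabla}\ln(1-f)\right\|^{2}=\left\|\overline{\nabla}f\right\|^{2}/(1-f)^{2}$. From~(\ref{eq4.2}) one gets $\partial_{t}f=\underline{\Delta}f+\left\|\overline{\nabla}f\right\|^{2}$; unlike in Theorems~\ref{Theorem10}--\ref{Theorem11} there is no $Scal'$ term, the evolution operator having no zeroth-order coefficient. Differentiating $\phi$ in $t$, using the flow relation $\partial_{t}g'^{ij}=2Ric'^{ij}$, the Bochner--Weitzenb\"ock identity and the Ricci identity exactly as in the computation leading to~(\ref{m10}) (the curvature terms from the metric evolution and from Bochner largely cancel, the Ricci identity then reintroducing one that is controlled by the curvature bounds), and invoking $-\rho_{1}g'\le Ric'\le\rho_{2}g'$, one is led to a differential inequality of the form~(\ref{m10}), namely
$$
(\partial_{t}-\underline{\Delta})\phi\le-\frac{2f}{1-f}\,g'(\overline{\nabla}f,\overline{\nabla}\phi)+\frac{c\,\rho_{1}}{1-f}\,\phi+c\,\rho_{2}\,\phi-2(1-f)\,\phi^{2},
$$
whose decisive feature is, as before, the good negative quadratic term $-2(1-f)\phi^{2}$.

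To convert this into the pointwise bound I would localize with the cut-off $\varphi(x,t)=\psi\bigl(d_{g'(t)}(x,p)/\rho\bigr)$ supported in $\mathcal{Q}_{2\rho,T}$ from Section~\ref{ricci-type flow}, apply the maximum principle to $t\varphi\phi$ with Calabi's trick at the cut locus, and use the Laplacian comparison theorem --- available precisely because $Ric'\ge-\rho_{1}g'$ --- to bound $-\underline{\Delta}\varphi$, $\partial_{t}\varphi$ and $\left\|\overline{\nabla}\varphi\right\|^{2}/\varphi$ as in~(\ref{m11})--(\ref{f13}). Evaluating $(\partial_{t}-\underline{\Delta})(t\varphi\phi)\ge 0$ at an interior maximum and solving the resulting quadratic in $\varphi\phi$ bounds it there by $\tfrac1t+c_{2}\rho_{1}+\tfrac1{\rho^{2}}(\rho c_{1}\sqrt{\rho_{1}}+c_{2})$, hence $\left\|\overline{\nabla}f\right\|^{2}\le(1-f)^{2}\bigl(\tfrac1t+c_{2}\rho_{1}+\tfrac1{\rho^{2}}(\rho c_{1}\sqrt{\rho_{1}}+c_{2})\bigr)$ throughout the interior of $\mathcal{Q}_{2\rho,T}$. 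Since each leaf $(M',g'(t))$ is complete and the curvature bounds hold uniformly on $(0,T]$, letting $\rho\to\infty$ kills the $\rho^{-1}$ and $\rho^{-2}$ terms; unwinding $\phi$ and $f=\ln(u/A)$ produces the displayed global estimate, and the final elementary step completes the proof.

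The main obstacle is the localization on the possibly noncompact complete leaf: making Calabi's trick rigorous at the cut locus, estimating the time-derivative $\partial_{t}d_{g'(t)}(x,p)$ of the evolving distance function (which is what forces the Ricci bounds into $\partial_{t}\varphi$), and checking that the $\rho\to\infty$ passage is legitimate and leaves an \emph{absolute} constant depending only on $n$. A secondary point is the sign bookkeeping in the Bochner computation under the forward coupling $\partial_{t}g'^{ij}=2Ric'^{ij}$, together with the elementary estimate for $s^{2}(1+\ln(A/s))^{2}$ on $(0,A]$, which is exactly where the hypothesis $0<u\le A$ is used.
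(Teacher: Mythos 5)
The paper states this corollary without giving any proof, so there is nothing to compare line by line; your route --- rerunning the Souplet--Zhang/cut-off/maximum-principle scheme of Theorems \ref{Theorem10}--\ref{Theorem11} for the \emph{plain} heat equation (\ref{eq4.2}) under the forward flow (\ref{eq4.1}) and then letting $\rho\to\infty$ on the complete leaf --- is exactly the derivation the paper's machinery is set up to deliver, and it works. Your sign bookkeeping is also right: for the forward coupling $\partial_{t}g'^{ij}=2Ric'^{ij}$ the Ricci term produced by the metric evolution cancels the one from Bochner--Weitzenb\"ock exactly, and the only curvature contribution surviving in the differential inequality for $\phi$ comes from the commutation $f_{j}f_{jji}-f_{i}f_{ijj}=-Ric'_{ij}f_{i}f_{j}$, controlled by the lower bound $Ric'\ge-\rho_{1}g'$; this is consistent with the corollary depending only on $\rho_{1}$.

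The one genuine defect is the very last step. From $\left\|\overline{\nabla}u\right\|^{2}\le u^{2}\bigl(1+\ln(A/u)\bigr)^{2}\bigl(\tfrac1t+c\rho_{1}\bigr)$ and the elementary fact $\sup_{0<s\le A}s^{2}\bigl(1+\ln(A/s)\bigr)^{2}=A^{2}$, what follows is
$$
t\left\|\overline{\nabla}u\right\|^{2}\le C A^{2}\,(1+\rho_{1}T),
$$
not $CA(1+\rho_{1}T)$: $A$ is a datum of the problem, not a constant that can be ``absorbed,'' and the bound with a single power of $A$ is not even scale-consistent in $u$ (replace $u$ by $\lambda u$, $A$ by $\lambda A$). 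The analogous corollaries in the literature this section follows (Souplet--Zhang type estimates, and the corresponding corollary in \cite{MXA}) all carry $A^{2}$, so the paper's $CA$ is almost certainly a typo that you have inherited rather than an error you can repair by your argument. With the exponent corrected, your proof is complete modulo the standard technical points you already flag (Calabi's trick at the cut locus, the bound on $\partial_{t}d_{g'(t)}$ via the two-sided Ricci bound, and the legitimacy of $\rho\to\infty$ on a complete leaf).
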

Also, we have the following.
\begin{corollary}
Let $(M, g)$ be a screen integrable globally null manifold and $(M',g')$ be a complete leaf of $S(T M)$. Assume that $(M',g'(t))$ be a complete solution of the degenerate Ricci-type flow (\ref{eq4.1}) such that the Ricci curvature satisfies $$-\rho_{1}g'(x,t)\leq Ric'(x,t)\leq\rho_{2}g'(x,t),$$ for all $(x,t)\in M'\times(0,T]$. Let $u=u(x,t)$ be any positive solution to the heat equation (\ref{eq4.2}). Then, for $\alpha>1$ with $\frac{1}{p}+\frac{1}{q}=\frac{1}{\alpha}$, we have
\begin{align}
\frac{\left\|\overline{\nabla}u\right\|^{2}}{u^{2}}-\frac{u_{t}}{u}\leq \frac{\alpha np}{4t}+c(n)\alpha^{2}(\rho_{1}+\rho_{2}),
\end{align}
for all $(x,t)\in M'\times(0,T]$ and absolute constant $c(n)$ depending only on $n$.
\end{corollary}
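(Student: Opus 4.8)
The plan is to read the estimate off the global Li--Yau--type gradient bound established earlier in this section, the only remaining work being the bookkeeping of the constants on the right and the passage from the coefficient $\alpha$ to $1$ in front of $u_{t}/u$. Throughout put $f=\ln u$, so that along the degenerate Ricci-type flow \eqref{eq4.1} the heat equation \eqref{eq4.2} gives $\partial_{t}f=\underline{\Delta}f+\|\overline{\nabla}f\|^{2}$; consequently $\|\overline{\nabla}f\|^{2}-\partial_{t}f=-\underline{\Delta}f$ and $\|\overline{\nabla}f\|^{2}-\alpha\,\partial_{t}f=-\alpha\,\underline{\Delta}f-(\alpha-1)\|\overline{\nabla}f\|^{2}$, so the asserted inequality is exactly the one-sided Laplacian bound $\underline{\Delta}\ln u\ge-\tfrac{\alpha np}{4t}-c(n)\alpha^{2}(\rho_{1}+\rho_{2})$.

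First I would invoke the global estimate proved above: under $-\rho_{1}g'\le Ric'\le\rho_{2}g'$ and $\alpha>1$ with $\tfrac1p+\tfrac1q=\tfrac1\alpha$ it reads
\[
\frac{\|\overline{\nabla}u\|^{2}}{u^{2}}-\alpha\frac{u_{t}}{u}\le\frac{\alpha np}{4t}+\frac{\alpha^{2}np}{2(\alpha-1)}\rho_{1}+\frac{\alpha n}{2}(\rho_{1}+\rho_{2})\sqrt{pq}.
\]
Using $\rho_{1}\le\rho_{1}+\rho_{2}$ and a choice of $p,q$ tied to $\alpha$ (e.g.\ the symmetric one $p=q=2\alpha$, for which $\sqrt{pq}=2\alpha$ and $\tfrac{p}{\alpha-1}$ stays bounded for $\alpha$ away from $1$), the last two terms collapse into $c(n)\alpha^{2}(\rho_{1}+\rho_{2})$, which is the right-hand side of the corollary. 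On the part of $M'\times(0,T]$ where $u_{t}\le 0$ the conclusion is immediate, since $\alpha>1$ forces $\tfrac{\|\overline{\nabla}u\|^{2}}{u^{2}}-\tfrac{u_{t}}{u}\le\tfrac{\|\overline{\nabla}u\|^{2}}{u^{2}}-\alpha\tfrac{u_{t}}{u}$.

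The main obstacle is the region $\{u_{t}>0\}$, where that comparison reverses and the global estimate alone does not suffice; there one has to re-derive a bound for $\|\overline{\nabla}f\|^{2}-\partial_{t}f=-\underline{\Delta}f$ directly. The route I would take is to repeat the maximum-principle scheme of Lemma \ref{lemma4.1} and Theorem \ref{theorem4.2}, now applied to $G=t\bigl(\|\overline{\nabla}f\|^{2}-\partial_{t}f\bigr)$: compute $(\underline{\Delta}-\partial_{t})G$ from the Bochner--Weitzenb\"{o}ck identity and the evolution equations of $g'^{ij}$, $\|\overline{\nabla}f\|^{2}$ and $\underline{\Delta}f$ along the flow, keep a free parameter $\alpha>1$ only in the completing-the-square step $2f_{ij}^{2}+2\alpha Ric'_{ij}f_{ij}\ge\tfrac{2\alpha}{np}f_{ij}^{2}-\tfrac{\alpha q}{2}{Ric'}_{ij}^{2}$ together with $f_{ij}^{2}\ge\tfrac1n(\underline{\Delta}f)^{2}=\tfrac1n(G/t)^{2}$, multiply by a Calabi cut-off $\varphi$ supported in $\mathcal{Q}_{2\rho,T}$, evaluate $(\underline{\Delta}-\partial_{t})(t\varphi G)$ at a maximum point of $t\varphi G$, and let $\rho\to\infty$ by completeness of $M'$. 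The resulting quadratic inequality in $G$, combined with $\sup_{M'}\|Ric'\|^{2}\le n(\rho_{1}+\rho_{2})^{2}$, then yields the claimed bound. The genuinely delicate point is absorbing the term $\rho_{1}\|\overline{\nabla}f\|^{2}$ --- which, on $\{u_{t}>0\}$, is not controlled by $G/t$ --- into the quartic term coming from expanding $\bigl(-\alpha\,\underline{\Delta}f-(\alpha-1)\|\overline{\nabla}f\|^{2}\bigr)^{2}$, via $ax^{2}-bx\ge-\tfrac{b^{2}}{4a}$; this is precisely the mechanism that produces the factor $\alpha^{2}/(\alpha-1)$ absorbed into $c(n)\alpha^{2}$ above.
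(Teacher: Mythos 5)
Your first step --- specializing the global estimate
\[
\frac{\left\|\overline{\nabla}u\right\|^{2}}{u^{2}}-\alpha\frac{u_{t}}{u}\le\frac{\alpha np}{4t}+\frac{\alpha^{2}np}{2(\alpha-1)}\rho_{1}+\frac{\alpha n}{2}(\rho_{1}+\rho_{2})\sqrt{pq}
\]
by taking $p,q\asymp\alpha$ --- is exactly how this corollary is meant to be obtained (no separate argument is given for it), and your observation that this only yields a bound of the form $c(n)\alpha^{2}(\rho_{1}+\rho_{2})$ when $\alpha$ stays away from $1$ (the term $\alpha^{3}\rho_{1}/(\alpha-1)$ blows up as $\alpha\to1^{+}$) points to a real imprecision in the statement rather than in your argument. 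The coefficient $1$ in front of $u_{t}/u$ is almost certainly the same slip that already appears at the end of the proof of the preceding global theorem, where $G/t_{0}$ is written as $\frac{\left\|\overline{\nabla}u\right\|^{2}}{u^{2}}-\frac{u_{t}}{u}$ even though $G=t(\left\|\overline{\nabla}f\right\|^{2}-\alpha\,\partial_{t}f)$; with the coefficient $\alpha$ restored, your first paragraph already completes the proof.

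The genuine gap is in your proposed repair on the set $\{u_{t}>0\}$. Running the maximum-principle scheme of Lemma \ref{lemma4.1} on $G=t(\left\|\overline{\nabla}f\right\|^{2}-\partial_{t}f)=-t\,\underline{\Delta}f$ gives, at a positive maximum,
\[
0\ \ge\ \frac{2}{np}\,\frac{G^{2}}{t}-\frac{G}{t}-2t\rho_{1}\left\|\overline{\nabla}f\right\|^{2}-\frac{nq}{2}\,t(\rho_{1}+\rho_{2})^{2},
\]
and the term $-2t\rho_{1}\left\|\overline{\nabla}f\right\|^{2}$ cannot be absorbed: on $\{u_{t}>0\}$ the quantity $y=\left\|\overline{\nabla}f\right\|^{2}$ is not controlled by $G/t=y-z$, and the good quadratic term is in $y-z$, not in $y$. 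The mechanism you invoke --- extracting $\frac{\alpha-1}{\alpha^{2}}y^{2}$ from the decomposition of $(y-z)^{2}$ and then applying $ay^{2}-by\ge-\frac{b^{2}}{4a}$ --- is precisely what closes the quadratic inequality for the function $t(y-\alpha z)$ with $\alpha>1$; it does not produce a closed inequality for $t(y-z)$, because the resulting quadratic is in $(y-\alpha z)$. This is the classical obstruction: the $\alpha=1$ Li--Yau estimate requires $Ric'\ge0$, which is exactly the extra hypothesis imposed in the paper's own $\alpha=1$ theorem (Theorem 5.4). So the corollary read literally, with coefficient $1$ and $\rho_{1}>0$, is not reachable by the method of this section; the statement should either carry the coefficient $\alpha$ on $u_{t}/u$ or assume $Ric'\ge0$.
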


	\section*{Acknowledgments}
	
	Mohamed H. A. Hamed and Samuel Ssekajja would like to thank the Simons Foundation through the RGSM-Project for financial support. 
	This work is based on the research supported wholly / in part by the National Research Foundation of South Africa (Grant Numbers: 95931 and 106072).


\begin{thebibliography}{xxx}		
		\bibitem{AA} A. Abolarinwa, Gradient estimates for heat-type equations on evolving manifolds, J. Nonlinear Evol. Equ. Appl. (2015), 1-19.		
		\bibitem{bena} B. Andrews and C. Hopper, The Ricci flow in Riemannian geometry, Springer-Verlag Berlin Heidelberg, 2011.		
		\bibitem{BC}B. Andrews and C. Hopper, The Ricci flow in Riemannian geometry, Lecture notes in mathematics, Springer Heidelberg, 2011.		
		\bibitem{MXA} M. Bailesteanu, X. Cao and A. Pulemotov, Gradient estimates for the heat equation under the Ricci flow, J. Funct. Anal. 258 (2010), 3517-3542.		
		\bibitem{EC}  E. Calabi, An extension of E. Hopfs maximum principle with an application to Riemannian geometry, Duke Math. J. 25 (1958), no. 1, 45-56.  	
		\bibitem{JCST} J. Cheeger and S-T. Yau, A lower bound for the heat kernel, Comm. Pure Appl. Math., 34 (1981), no. 4, 465-480.			
		\bibitem{BChin} B. Chow, S. Chu, D. Glickenstein, C. Guenther, J. Isenberg, T. Ivey, D. Knopf, P. Lu, F. Luo and L. Ni, The Ricci flow: Techniques and Applications, American Mathematical Society, Providence, RI, 2015.		
		\bibitem{EDLP} K. Ecker, D. Knopf, L. Ni and P. Topping, Local monotonicity and mean value formulas for evolving Riemanniann manifolds, J. Reine Angew. Math. 616 (2008), 89-130. 	
        \bibitem{duggal1} K. L. Duggal, On scalar curvature in lightlike geometry, J. Geom. Phys. 57 (2007), no. 2, 473-481. 
		\bibitem{db} K. L. Duggal and A. Bejancu, Lightlike subamnifold of semi-Riemannian manifold and applications, Vol. 364 of Mathematics and Its Applications, Kluwer Academic Publishers, Dordrecht, 1996.		
		\bibitem{duggal} K. L. Duggal, Warped product of lightlike manifolds, Nonlinear Anal. 47 (2001), no. 5, 3061-3072. 		
		\bibitem{Jin} K. L. Duggal and D. H. Jin, Geometry of null curves, Math. J. Toyama Univ. 22 (1999), 95-120.		
		\bibitem{ds2} K.  L.  Duggal and B. Sahin, Differential geometry of lightlike submanifolds. Frontiers in Mathematics, Birkh\"auser Verlag, Basel, 2010.			
		\bibitem{SW} S. W. Hawking and G. F. R. Ellis, The large scale structure of space-time, Cambridge University Press, Cambridge, 1973.
		\bibitem{KM} M. Kossowski, The intrinsic conformal structure and Gauss map of a lightlike hypersurface in Minkowski space, Trans. Amer. Math. Soc. 316 (1989), no.1, 368-383.
		\bibitem{KDN}  D. N. Kupeli, Singular semi-Riemannian geometry, Kluwer Academic Publishers,  Dordrecht, Vol. 366, 1996.
		\bibitem{RH}  R. S. Hamiliton, Three-manifolds with positive Ricci curvature, J. Differential geom. 17 (1982), no. 2, 255-306.
        \bibitem{PS} P. Li and S.-T. Yau, On the parabolic kernel of the Schr\"odinger operator, Acta Math. 156 (1986), no. 3-4, 153-201. 
        \bibitem{massa} F. Massamba, Totally contact umbilical lightlike hypersurfaces of indefinite Sasakian manifolds, Kodai Math. J. 31 (2008), 338--358. 
        \bibitem{massam} F. Massamba, Screen conformal invariant lightlike hypersurfaces of indefinite Sasakian space form,  Afr. Diaspora J. Math. 14 (2012), no. 2, 22--37.  
        \bibitem{mass1} F. Massamba, On semi-parallel lightlike hypersurfaces of indefinite Kenmotsu manifolds, J. Geom. 95 (2009) 73--89.
        \bibitem{mass2}  F. Massamba, On lightlike geometry in indefinite Kenmotsu manifolds, Math. Slovaca 62 (2012), 315--344.
		\bibitem{sam1} F. Massamba and S. Ssekajja, A New Approach to totally umbilical null submanifolds, Novi Sad J. Math. 47 (2017), no. 2, 63-76.
		\bibitem{oneil} B. O'Neill, Semi-Riemannian geometry with applications to relativity, New York, Academic Press, 1983.		
        \bibitem{GP} G. Perelman, The entropy formula for the Ricci flow and its geometric application, ArXiv: math.DG/0211159v1, 2002.
        \bibitem{RS} R. Schoen and S.-T. Yau, Lectures on differential geometry, International Press, Cambridge, MA, 1994.
        \bibitem{SST}  S. T. Swift, Null limit of the Maxwell-Sen-Witten equation,  Classical Quantum Gravity 9 (1992), no. 7, 1829-1838. 
        \bibitem{QIS}  Q. S. Zhang, Some gradient estimates for the heat equation on domains and for an equation by Perelman, Int. Math. Res. Not, 2006, Article ID 92314, (2006), 39 pp. 
	\end{thebibliography}
\end{document}